\newtheorem{theorem}{Theorem}[subsection]
\newtheorem{stmt}[theorem]{}
\newtheorem{remark}[theorem]{Remark}
\newtheorem{definition}[theorem]{Definition}
\newcommand{\sconn}{sc}
\newcommand{\res}{res}
\newcommand{\Ext}{Ext}
\newcommand{\GL}{GL}
\newcommand{\SOrth}{SO}
\newcommand{\Symp}{Sp}
\newcommand{\SL}{SL}
\newcommand{\Sym}{Sym}
\newcommand{\Lie}{Lie}
\newcommand{\Stab}{Stab}
\newcommand{\End}{End}
\newcommand{\Ad}{Ad}
\newcommand{\Int}{Int}
\newcommand{\Gsc}{G_{\sconn}}
\newcommand{\exterior}{{\bigwedge}}
\newcommand{\tensor}{\otimes}
\newcommand{\Z}{\mathbf{Z}}
\newcommand{\Q}{\mathbf{Q}}
\newcommand{\G}{\mathbf{G}}
\newcommand{\Gm}{\G_m}
\newcommand{\LL}{\mathscr{L}}
\newcommand{\supp}{Supp}
\newcommand{\goodwt}{\mathcal{W}}
\newcommand\wts[1]{{ X_#1 }}
\newcommand\dom[1]{{ X_{#1}^+ }}
\newcommand{\ho}[2]{\nabla_{#1}(#2)}
\newcommand{\chis}[1]{{ \chi_S( #1 ) }}
\newcommand{\chih}[1]{{ \chi_H( #1 ) }}
\begin{document}

 \title{Title}
 \author{Chuck Hague\corref{cor1}}
 \ead{hague@math.udel.edu}
 \ead[url]{http://sites.google.com/site/chuckhague/}
 \cortext[cor1]{Corresponding author}
 \address{Department of Mathematical Sciences \\
University of Delaware \\
501 Ewing Hall \\
Newark, DE 19716\fnref{label1}}
 
  \author{George McNinch\fnref{label2}}
 \ead{george.mcninch@tufts.edu}
 \ead[url]{http://gmcninch.math.tufts.edu/}
 \fntext[label2]{Research of McNinch supported in part by the US NSA award H9230-11-1-0164.}
 \address{Department of Mathematics \\ 
Tufts University \\ 
503 Boston Ave \\ 
Medford, MA 02155\fnref{label2}}

\title{Some good-filtration subgroups of simple algebraic groups}

\begin{abstract}
Let $G$ be a connected and reductive algebraic group over an
algebraically closed field of characteristic $p>0$.  An interesting
class of representations of $G$ consists of those $G$-modules having
a \emph{good filtration} -- i.e. a filtration whose layers are the
induced highest weight modules obtained as the space of
global sections of $G$-linearized line bundles on the flag variety
of $G$. Let $H \subset G$ be a connected and reductive subgroup of
$G$. One says that $(G,H)$ is a \emph{Donkin pair}, or that $H$ is a
\emph{good filtration subgroup} of $G$, if whenever the $G$-module
$V$ has a good filtration, the $H$-module $\res^G_H V$ has a good
filtration.

In this paper, we show when $G$ is a ``classical group'' that the
\emph{optimal} $\SL_2$-subgroups of $G$ are good filtration
subgroups. We also consider the cases of subsystem subgroups in all
types and determine some primes for which they are good filtration
subgroups.
\end{abstract}

\begin{keyword}
algebraic groups \sep Donkin pairs \sep good filtrations
\end{keyword}

\maketitle

\setcounter{tocdepth}{1}
\tableofcontents

\section{Introduction}
\label{sec:introduction}

Let $G$ be a connected and reductive group over an algebraically
closed field of characteristic $p>0$, and let $H \subset G$ be a
closed subgroup which is also connected and reductive.  We are
concerned here with the linear representations of the algebraic group
$G$ -- i.e. with $G$-modules -- and with their restriction to $H$.

Of particular interest are the induced $G$-modules $\ho G \lambda$
and the induced $H$-modules $\ho H \lambda$ obtained as global sections of equivariant bundles on the associated flag varieties; see \S\ref{induced modules}.
One says that $H$ is a \emph{good-filtration subgroup} of $G$ -- or
that $(G,H)$ is a \emph{Donkin pair} -- provided that for any induced
$G$-module $V$ the $H$-module $\res^G_HV$ obtained from $V$ by
restriction to $H$ has an exhaustive filtration whose successive
quotients are induced $H$-modules.

Donkin proved in \cite{donkin} -- under some mild assumptions on the
characteristic -- that a Levi factor of a parabolic subgroup of $G$ is
always a good-filtration subgroup; subsequently, Mathieu gave an
unconditional proof \cite{mathieu} of this result using the geometric
method of Frobenius splitting (cf. also the accounts in \cite{BK} \S4, \cite{JRAG} Ch. G, and \cite{vdK2}).

In \cite{brundan}, Brundan proved that a large class of reductive
spherical subgroups of $G$ are good filtration subgroups, under mild
restrictions on $p$; recall that a subgroup $H$ is said to
\emph{spherical} if there is a dense $H$-orbit on the flag variety
$G/B$ of $G$.  In that paper, Brudan also conjectured that $H$ is a
good filtration subgroup if either (i) $H$ is the centralizer of a
graph automorphism of $G$, or (ii) $H$ is the centralizer of an
involution of $G$ and $p > 2$.  Brundan's conjecture is now a theorem;
many cases were covered already in \cite{brundan} and the remaining
cases were handled by van der Kallen in \cite{vdK}.

In this paper we extend the study of Donkin pairs to more reductive
subgroups of $G$. In particular, we consider two classes of reductive
subgroups: optimal $SL_2$-subgroups and the so-called subsystem
subgroups. In \S\ref{section:preliminaries} we give preliminaries on algebraic groups, good
filtrations, and optimal $SL_2$ subgroups.

In this paper a group of \emph{classical type}, or just a \emph{classical group}, will be a group isomorphic to $SL(V)$ or the stabilizer of a nondegenerate alternating or bilinear form $\beta$ when $p > 2$. In \S \ref{sec:the groups of interest} we give a general criterion for a reductive subgroup of a group of classical type to be a good filtration subgroup (\ref{check-Donkin-pair-on-exterior-powers}). Since a group of classical type is not simply-connected when the form $\beta$ is symmetric we also consider the simply-connected covers of these groups (\ref{donkin-subgroups-of-spin}). We then give a criterion for checking when a reductive subgroup of a group of exceptional type is a good filtration subgroup (Theorem \ref{th:exceptional group Donkin pairs}).

In \S\ref{sec:main results} we give our main results. In \S\ref{sub:optimal sl2 subgroups of classical groups} we consider the case in which $G$ is a classical group and $S \subset G$ is an \emph{optimal $SL_2$ subgroup}, a notion essentially due to Seitz \cite{seitz}; we
follow the characterization of these subgroups given in \cite{mcninch-optimal}. The main result of this section is Theorem \ref{classical-optimal-donkin-pair}, which states that
optimal $SL_2$-subgroups of classical groups are good filtration
subgroups. Our proof is modeled on arguments of Donkin from \cite{donkin}. We also consider optimal $SL_2$ subgroups of the simply-connected covers of classical groups (Theorem \ref{spin-group-optimal-donkin-pair}). In \S\ref{sub:optimal sl2s in the exceptional case} we consider optimal $SL_2$ subgroups of exceptional groups. In these theorems we crucially use induction arguments which reduce to the case of a \emph{distinguished} optimal $SL_2$ subgroup.

Recall that a \emph{subsystem subgroup} of $G$ is a connected semisimple
subgroup which is normalized by a maximal torus. In \S\ref{sub:subsystem subgroups in the exceptional case} we consider arbitrary subsystem subgroups of semisimple groups. Since Brundan's conjecture implies that every subsystem subgroup of a group of type $A$, $B$, $C$, or $D$ is a good filtration subgroup when $p > 2$, we only consider the exceptional case. By the transitivity of the good filtration subgroup property and the fact that Levi factors of parabolic subgroups are good filtration subgroups, it suffices to consider only the case where the subsystem subgroup is of maximal rank (= rank $G$). The main result in this section is Theorem \ref{th:subsystem subgroups in the exceptional case}, which gives primes $p$ for which the maximal rank reductive subgroups not already covered by the Brundan conjecture are good filtration subgroups.

Also, we would like to thank the anonymous referee for suggesting useful improvements.


%


\section{Preliminaries}
\label{section:preliminaries}

\subsection{Induced modules for reductive groups}
\label{induced modules}
Let $k$ be an algebraically closed field of positive characteristic $p$ and let $G$ be a connected and reductive algebraic group over $k$. Fix a maximal torus $T \subset
G$, and choose a Borel subgroup $B \subset G$ containing $T$. For us, a representation of a linear algebraic group always means a rational representation; namely, a co-module for the coordinate algebra.

We write $X^*(T)$ for the character group and $X_*(T)$ for the
co-character group of the torus $T$.  We write $(\lambda,\phi) \mapsto
\langle \lambda,\phi \rangle \in \Z$ for the natural pairing $X^*(T)
\times X_*(T) \to \Z$. Recall that the choice of the Borel subgroup
$B$ determines a system of positive roots $R^+$ of the set of roots $R
\subset X^*(T)$.

Each character $\lambda \in X^*(T)$ determines a $G$-linearized line
bundle $\LL(\lambda)$ on the flag variety $G/B$. The group $G$ acts
linearly on the space of global sections
\begin{equation*}
H^0(G/B,\LL(\lambda));
\end{equation*}
we write $\ho{G}{\lambda}$ for this $G$-module (which is denoted
$H^0_G(\lambda) = H^0(\lambda)$ in \cite{JRAG} \S II). Then
$\ho{G}{\lambda}$ is non-0 if and only if $\lambda$ is
\emph{dominant}; i.e. if and only if $\langle \lambda,\alpha^\vee
\rangle \ge 0$ for each $\alpha \in R^+$. The representations
$\ho{G}{\lambda}$ are known as \emph{induced modules} for $G$.

Assume that $G$ is quasisimple; in this case, we number the nodes of
the Dynkin diagram of $G$ -- and hence the simple roots and
fundamental dominant weights -- as in Bourbaki \cite{bourbaki}, Plate
 I-IX. Let $\varpi_i \in X^*(G) \tensor \Q$ denote the
\emph{fundamental dominant weights}; if $\alpha_1,\dots,\alpha_r$ are
the simple roots with corresponding co-roots $\alpha_i^\vee \in
X_*(T)$, then $\langle \varpi_i,\alpha_j^\vee \rangle = \delta_{ij}$
for $1 \le i,j \le r$.  Of course, $G$ is \emph{simply connected} if
and only if $\varpi_i \in X^*(T)$ for $1 \le i \le r$.

\subsection{Modules with a good filtration} \label{subsub:modules with a good filtration}

Let $V$ be any $G$-module.  A collection of $G$-submodules $V_i
\subset V$ for $i \in \Z_{\ge 0}$ forms a \emph{filtration} of $V$
provided that $V_i \subset V_{i+1}$ for $i \ge 0$ and that $V =
\bigcup_{i \ge 0} V_i$.  The \emph{layers} of the filtration are the
quotient modules $V_i/V_{i-1}$.

The filtration of $V$ is said to be a \emph{good filtration} if for
each $i \ge 1$, the layer $V_i/V_{i-1}$ is either $0$ or is isomorphic to an induced
module $\ho{G}{\lambda_i}$ for some dominant weight $\lambda_i$.

For a $G$-module $V$ with a good filtration, the \emph{support} of $V$ (written as $\supp(V)$) is
the set of $\lambda \in \dom G$ for which $\ho{G}{\lambda}$ occurs as a
layer in a good filtration of $V$. It follows from
\cite{JRAG}, Prop. II.4.16 that the support of $V$ is independent of
the choice of good filtration of $V$.

\begin{stmt}
\label{direct-summand-good-filt}
Let $(*) \quad 0 \to V \to E \to W \to 0$ be a short exact sequence of
$G$-modules.
\begin{enumerate}[(a)]
\item Assume that $V$ has a good filtration. Then $E$ has a good
 filtration if and only if $W$ has a good filtration.
\item If the sequence $(*)$ is \emph{split exact}, and if $E$ has a
 good filtration, then both $V$ and $W$ have a good filtration.
\end{enumerate}
\end{stmt}

\begin{proof}
 Assertion (a) follows from the ``homological'' characterization of
 good filtrations found in \cite{JRAG}, Prop. II.4.16,
 and (b) is an immediate consequence of (a).
\end{proof}

We also observe the following:
\begin{stmt}
\label{good-filtration-filtration}
If the $G$-module $V$ has a filtration for which each quotient
$V_i/V_{i-1}$ has a good filtration for $i \ge 1$, then $V$ has a
good filtration.
\end{stmt}

\begin{proof}
This is straightforward when $V$ is finite dimensional; the general
case is obtained in \cite{donkin}, Prop. 3.1.1.
\end{proof}

The following important result was first obtained for $p \gg 0$ by J. Wang, with improvements to the prime $p$ by Donkin \cite{donkin} (under some small restrictions), and in general by
Mathieu \cite{mathieu}.

\begin{stmt}[Wang, Donkin, Mathieu]
 \label{tensor-good-filtration} If $V$ and $W$ are finite dimensional
 $G$-modules each having a good filtration, then the $G$-module $V
 \tensor W$ has a good filtration.
\end{stmt}

We also have the following useful fact.
\begin{stmt}
\label{tensor-induced-quotient}
Let $\lambda,\mu \in \dom G$. There is a surjective mapping of
$G$-modules $\ho{G}{\lambda} \tensor \ho{G}{\mu} \to \ho{G}{\lambda +
 \mu}$ whose kernel has a good filtration.
\end{stmt}

\begin{proof}
It follows from \ref{tensor-good-filtration} that $M = \ho{G}{\lambda} \tensor \ho{G}{\mu}$ has a good filtration. Since $\lambda + \mu$ is the highest weight of $M$ we have $\lambda + \mu \in \supp(M)$. Since any weight $\gamma$ of $M$ satisfies $\gamma \le \lambda + \mu$, it follows that for $\sigma \in \supp(M) \setminus \{ \lambda + \mu \}$ we have $\sigma < \lambda+\mu$ and hence $\Ext^1_G(\ho{G}{\sigma},\ho{G}{\lambda + \mu}) = 0$ \cite{JRAG}, Prop. II.6.20.

Since $\ho G {\lambda + \mu}$ occurs as a layer in a good filtration of $M$, there is a submodule $V \subseteq M$ with a surjection $ f : V \twoheadrightarrow \ho G {\lambda + \mu} $. By the above, $\Ext^1_G(M/V,\ho{G}{\lambda + \mu}) = 0$, so $f$ lifts to a surjection $\widehat f : M \twoheadrightarrow \ho G {\lambda + \mu}$, and the kernel of $\widehat f$ has a filtration with layers $ \ho G \sigma $ for $\sigma \in \supp(M) \setminus \{ \lambda + \mu \}$.
\end{proof}

\begin{stmt}
\label{multilinear}
Let $V$ be a $G$-module with a good filtration. Then $V^{\otimes m}$
has a good filtration for all $m \geq 0$. If $m < p$ then
\begin{equation*}
 \exterior^m V \quad \text{and} \quad \Sym^m  V
\end{equation*}
each have a good filtration.
\end{stmt}

\begin{proof}
 The observation that $V^{\tensor m}$ has a good filtration follows from
 \ref{tensor-good-filtration}. If $m < p$, it is sufficient by
 \ref{direct-summand-good-filt} to argue that $\exterior^m V$ and
 $\Sym^m V$ are direct summands of $V^{\otimes m}$ as $G$-modules. Although this fact is well-known we give the proof here for completeness. For this, it suffices to observe that there are $G$-linear
 splittings $\sigma_1:\exterior^m V \to V^{\tensor m}$ and
 $\sigma_2:\Sym^m V \to V^{\tensor m}$ of the natural surjections
\begin{equation*}
  \pi_1:V^{\tensor m} \to \exterior^m V  \quad \text{and} \quad
  \pi_2:V^{\tensor m} \to \Sym^m V;
\end{equation*}
i.e. $\pi_i \circ \sigma_i = \operatorname{id}$ for $i=1,2$. We
define the required $\sigma_i$ as follows; for $v_1,\dots,v_m \in V$
one sets
\begin{equation*}
  \sigma_1(v_1 \wedge \cdots \wedge v_m) = \dfrac{1}{m!}\sum_{\tau \in \Sym_m} 
  \operatorname{sgn}(\tau) v_{\tau(1)} \tensor \cdots \tensor v_{\tau(m)}
\end{equation*}
and
\begin{equation*}
  \sigma_2(v_1  \cdots  v_m) = \dfrac{1}{m!}\sum_{\tau \in \Sym_m} 
   v_{\tau(1)} \tensor \cdots \tensor v_{\tau(m)}
\end{equation*}
where $\Sym_m$ is the symmetric group on $m$ letters, and
$\operatorname{sgn}(\tau) \in \{\pm 1\} \subset k^\times$ is the sign
of the permutation $\tau \in \Sym_m$ (note that if $m>1$, then
$p>2$).  We leave to the reader the task of checking that the rules
above yield well-defined $G$-homomorphisms which determine sections
$\sigma_i$ to the maps $\pi_i$ for $i=1,2$.
\end{proof}

\subsection{Donkin pairs}
Let $H \subset G$ be a closed subgroup, and suppose that both $H$ and
$G$ are connected and reductive.  We choose Borel subgroups $B_H
\subset H$ and $B_G \subset G$ with $B_H \subset B_G$, and we choose
maximal tori $T_H \subset B_H$ and $T_G \subset B_G$ with $T_H \subset
T_G$.

Write $\wts G := X^*(T_G)$ for the weight lattice of $T_G$, and let
$\dom G \subset X_G$ denote the dominant weights (determined by the
choice of Borel subgroup $B_G$); similarly, write $\dom H \subset \wts
H := X^*(T_H)$. For $\lambda \in \dom G$ recall that $\ho{G}{\lambda}$
is the induced $G$-module with highest weight $\lambda$. Similarly, for
$\mu \in \dom H$ we write $\ho{H}{\mu}$ for the induced $H$-module
with highest weight $\mu$.

\begin{definition} One says that $(G,H)$ is a \emph{Donkin pair} if
 whenever $V$ is a $G$-module for which $V$ has a good filtration,
 then $\res^G_HV$ has a good filtration as $H$-module. One also says
 that $H$ is a \emph{good filtration subgroup} of $G$.
\end{definition}

Let us write $\goodwt(G,H)$ for the set of dominant weights $\lambda
\in \dom G$ for which $\res^G_H \ho{G}{\lambda}$ has a good filtration
as $H$-module.

\begin{stmt}
 $(G,H)$ is a Donkin pair if and only if $\lambda \in \goodwt(G,H)$
 for each $\lambda \in \dom G$.
\end{stmt}
\begin{proof} This follows from \ref{good-filtration-filtration}.
\end{proof}

An important example of Donkin pairs is given by the following result;
as for \ref{tensor-good-filtration}, this result was obtained
first by Donkin \cite{donkin} (under some mild assumptions)
and subsquently by Mathieu \cite{mathieu} in general.
\begin{stmt}
\label{Levi-Donkin-pair}
If $L \subset G$ is a Levi factor of a parabolic subgroup of $G$,
then $(G,L)$ is a Donkin pair.
\end{stmt}

\begin{remark}
\label{remark-tensor-via-diagonal}
One can understand the earlier result \ref{tensor-good-filtration}
using the notion of Donkin pairs. Namely, if $G$ is a connected and
reductive group, let $\Delta:G \to G \times G$ denote the diagonal
embedding. Then \ref{tensor-good-filtration} amounts to the assertion
that $(G\times G,\Delta(G))$ is a Donkin pair.
\end{remark}

\begin{stmt}
 \label{direct-factor-Donkin-subgroup}
 Let $G_1$ and $G_2$ be connected and reductive algebraic groups, and
 let $G = G_1 \times G_2$. 
 \begin{enumerate}[(a)]
 \item  $(G,G_1)$ and $(G,G_2)$ are Donkin pairs.
 \item If $H \subset G$ is a connected and reductive subgroup, write
   $H_i \subset G_i$ for the image $H_i = \pi_iH \subset G_i$ where
   $\pi_i:G \to G_i$ are the projection mappings.  If $(G_i,H_i)$ is
   a Donkin pair for $i=1,2$, then $(G,H)$ is a Donkin pair.
 \end{enumerate}
\end{stmt}

\begin{proof}
 (a) follows from \cite{donkin-normality}, Prop. 1.2(e).

 For (b), argue as in \cite{donkin} (3.4.6) to see that $(G =
 G_1 \times G_2,H_1 \times H_2)$ is a Donkin pair, so it suffices to
 see that $(H_1 \times H_2,H)$ is a Donkin pair. Now, the inclusion $H \hookrightarrow H_1 \times H_2$ factors as $$H \hookrightarrow H \times H \stackrel q \twoheadrightarrow H_1 \times H_2 \,,$$ where the first map is the diagonal inclusion and $q$ is the restriction of the projection $\pi_1 \times \pi_2$ to $H \times H$. Since the pullback by $q$ of an induced module for $H_1 \times H_2$ is an induced module for $H \times H$ (since $q$ is a surjection), the fact that $(H_1 \times H_2,H)$ is a Donkin pair now follows from \ref{tensor-good-filtration} as in Remark \ref{remark-tensor-via-diagonal}.
\end{proof}

\begin{stmt}
 \label{reduce-donkin-pair-to-simply-connected-case}
 Let $G$ be a connected and semisimple group, let $H$ be a connected
 and reductive subgroup of $G$, and let $\pi:G_{\sconn} \to G$ be the
 simply connected covering group. If $(G_{\sconn},\pi^{-1} H)$ is a
 Donkin pair, then $(G,H)$ is a Donkin pair.
\end{stmt}

\begin{proof}
 Argue via \cite{donkin}, 3.4.3.
\end{proof}

\subsection{Checking for a Donkin pair using finitely many dominant weights}
Let $G$ be a semisimple group, and fix a system of simple roots $S
\subset R \subset X=X^*(T)$. We have the following generalization of a result of Donkin found in \cite{donkin} Prop. 3.5.4, whose proof we have followed closely.

\begin{stmt}
\label{Donkin-pair-by-set-of-weights}
Suppose that $\lambda_1,\dots,\lambda_r \in \dom G$ have the property
\begin{equation*}
  (*) \quad \mu \in \dom G \quad \text{if and only if} 
  \quad \mu \in \sum_{i =1}^r \Z_{\ge 0} \lambda_i
\end{equation*}
for $\mu \in X$.  If $H$ is a reductive subgroup of $G$, then
$(G,H)$ is a Donkin pair if and only if $\lambda_i \in \goodwt(G,H)$
for $1 \le i \le r$.
\end{stmt}

\begin{proof}
This follows from the same technique as in the proof of
 \cite{donkin}, Prop. 3.5.4. We give a full proof here for completeness. Consider the partial order on $X$ given
 as follows: $\lambda \succ \mu$ if and only if $\lambda - \mu =
 \sum_{\alpha \in S} m_\alpha \alpha \in X \tensor_\Z \Q$ where
 $m_\alpha \in \Q$ and $m_\alpha \ge 0$ for all $\alpha$. Note that
 if $\lambda \ge \mu$ then $\lambda \succ \mu$.  It follows from
 \cite{humphreys-LA}, \S13, Exerc. 8 that any dominant weight
 $\lambda$ satisfies $\lambda \succ 0$. In view of the assumption
 $(*)$, for any $\mu \in \dom G$ with $\mu \ne 0$ we have $\mu \prec
 \mu - \lambda_j$ for some $1 \le j \le r$.

 We now give the proof.  If $(G,H)$ is a Donkin pair, it is of course
 immediate from definitions that each $\lambda_i \in \goodwt(G,H)$
 for $1 \le i \le r$.

 We now suppose that $\lambda_i \in \goodwt(G,H)$ for $1 \le i \le
 r$, and we must show that $(G,H)$ is a Donkin pair. Suppose on the
 contrary that $(G,H)$ is not a Donkin pair.  Then there is a weight
 $\mu \in X_G^+$ for which $\mu \not \in \goodwt(G,H)$.  We may and
 will suppose that $\mu \in \dom G$ is minimal with respect to the
 partial order $\prec$; i.e., we suppose that $\mu \notin
 \goodwt(G,H)$ and $\lambda \in \goodwt(G,H)$ for all $\lambda \in
 \dom G$ with $ \lambda \prec \mu $. Since $0 \in \goodwt(G,H)$ we have
 $\mu \ne 0$. By hypothesis, as noted above, we have $\mu
 \succ \mu - \lambda_j \in \dom G$ for some $1 \leq j \leq r$. Since
 $\lambda_j \in \goodwt(G,H)$ by assumption, we see that $\mu -
 \lambda_j \ne 0$.

 We now consider the $G$-module $T = \ho{G}{\mu - \lambda_j} \tensor
 \ho{G}{\lambda_j}$.  By the minimality of $\mu$, we have $\mu -
 \lambda_j \in \goodwt(G,H)$. Since we have also $\lambda_j \in
 \goodwt(G,H)$ by assumption, \ref{tensor-good-filtration} implies
 that $T$ has a good filtration as $G$-module and that $\res^G_H T$
 has a good filtration as an $H$-module.

According to \ref{tensor-induced-quotient}, there is an exact sequence
of $G$-modules
\begin{equation*}
  0 \to M \to T  \to
  \ho{G}{\mu} \to 0
\end{equation*}
for which $M$ has a good filtration as $G$-module. Moreover, since
the dimension of the $\mu$-weight space in $\ho{G}{\mu - \lambda_j}
\otimes \ho{G}{\lambda_j}$ is 1, it follows from
\cite{JRAG}, Prop. II.4.16 that in any good filtration of $T$,
there is precisely one layer isomorphic to $\ho{G}{\mu}$. In
particular, $\ho{G}{\mu}$ does not appear as a layer in the
$G$-module $M$.

Since any weight $\gamma$ of $T$ satisfies $\mu \ge \gamma$ and in
particular $\mu \succ \gamma$, it follows that $\res^G_H M$ has a
good filtration as $H$-module; cf.  \ref{good-filtration-filtration}.
It now follows from \cite{JRAG}, II.4.17 that $\res^G_H \ho{G}{\mu}$
has a good filtration as $H$-module, so that $\mu \in \goodwt(G,H)$,
contrary to assumption. This contradiction establishes that $(G,H)$
is a Donkin pair, as required.
\end{proof}

\begin{remark}
\label{simply-connected-weight-lattice-donkin-pair-criteria}
In particular, we obtain the following statement (\cite{donkin}, Prop. 3.5.4): If $G$ is semisimple and simply connected of rank $r$, then the
 assumption of \ref{Donkin-pair-by-set-of-weights} holds for the
 collection of weights $\lambda_i = \varpi_i$ for $1 \le i \le r$,
 where $\varpi_i$ is the $i$-th fundamental dominant weight.

On the other hand, suppose that $G$ is semisimple, that
$\pi:G_{\sconn} \to G$ is its simply connected covering group, and
that $H \subset G$ is a reductive subgroup. Write $\tilde H =
\pi^{-1}H \subset G_{\sconn}$. The formulation
\ref{Donkin-pair-by-set-of-weights} will allow us to check that
$(G,H)$ is a Donkin pair in situations when we are unable to determine
whether $(G_{\sconn},\tilde H)$ is a Donkin pair, cf \ref{odd-orthog-description}, \ref{even-orthog-description}, and \ref{check-Donkin-pair-on-exterior-powers} below. 
\end{remark}

\subsection{Optimal $\SL_2$-subgroups}
\label{optimal-SL2-subgroups}
In this section, let $H$ be a quasisimple group with root system $R$.
We want to consider primes which are \emph{good} for $G$ or
equivalently primes which are \emph{good} for $R$. Recall that bad (=not
good) primes are as follows: the prime $p=2$ is bad whenever $R \not =
A_r$, $p=3$ is bad if $R = G_2,F_4,E_r$, and $p=5$ is bad if $R=E_8$.
Finally, the prime $p$ is said to be \emph{very good} for $R$ if $p$
is good for $R$ and if $R = A_r$ then $p$ does not divide $r+1$.

The following result can be deduced as a consequence of Premet's proof
of the Bala-Carter Theorem which classifies the nilpotent $G$-orbits
in $\Lie(G)$:
\begin{stmt}
 \label{assoc-cochar}
 Let $H$ be a quasisimple group and suppose that the characteristic
 of $k$ is good for $H$. Let $G$ be a reductive group which is
 isomorphic to a Levi factor of a parabolic subgroup of $H$.  If $X
 \in \Lie(G)$ is nilpotent, then there is a cocharacter $\lambda:\Gm
 \to G$ such that
 \begin{enumerate}[(i)]
 \item  for each $t \in k^\times$, $\Ad(\lambda(t))X = t^2 X$.
 \item the image of $\lambda$ is contained in the derived group of $M
   = C_G(S)$ for some maximal torus $S$ of the group $C_G(X)$.
 \end{enumerate}
 If $\lambda,\lambda':\Gm \to G$ are two cocharacters satisfying (i)
 and (ii), there is a unique element $u \in R_u(C_G(X))$ such that
 $\lambda'(t) = u \lambda(t) u^{-1}$ for each $t \in k^\times$;
 i.e. $\lambda' = \Int(u) \circ \lambda$.
\end{stmt}

\begin{proof}
 \cite{mcninch-rat}, Proposition 18 shows how to deduce the
 existence of $\lambda$ from results of \cite{premet}. For the
 conjugacy assertion see \cite{mcninch-optimal}, Prop/Def 21.
\end{proof}

If $X \in \Lie(G)$ is nilpotent, we say that a cocharacter $\lambda$
satisfying (i) and (ii) of \ref{assoc-cochar} is \emph{associated
 with} $X$.

Let $X_0 = \begin{pmatrix}
 0 & 1 \\
 0 & 0
\end{pmatrix}
\in \Lie(\SL_2)$.  One says that a homomorphism $\phi:\SL_2 \to G$ is
\emph{optimal} if the cocharacter of $G$ determined by restriction of
$\phi$ to the diagonal torus of $\SL_2$ is associated with the
nilpotent element $X = d\phi(X_0)$.

\begin{stmt}
 \label{optimal-central-isogeny}
 Let $\pi:G_1 \to G$ be a central isogeny and let $\phi:S \to G_1$ be
 a homomorphism of algebraic groups. Then $\phi$ is optimal if and
 only if $\pi \circ \phi$ is optimal.
\end{stmt}

\begin{proof}
 The assertion follows from \cite{mcninch-rat}, Lemma 14.
\end{proof}

\begin{theorem}[\cite{mcninch-optimal}, Theorem 44] \label{th:mcninch-optimal}
 Let $X \in \Lie(G)$ be nilpotent and suppose that $X^{[p]} = 0$.
 There exists an optimal homomorphism $\phi:\SL_2 \to G$ with $X =
 d\phi (X_0 )$.  If $\phi'$ is another optimal homomorphism $\SL_2
 \to G$ with $X = d\phi(X_0)$, then $\phi$ and $\phi'$ are conjugate
 by a unique element of $R_uC_G(X)$.
\end{theorem}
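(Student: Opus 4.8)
The plan is to construct an optimal $\phi$ from a Jacobson--Morozov triple adapted to an associated cocharacter, using the hypothesis $X^{[p]}=0$ to integrate the Lie-algebra data to a homomorphism of algebraic groups, and to settle uniqueness by reducing to the conjugacy clause of \ref{assoc-cochar}. To begin, fix a cocharacter $\lambda\colon\Gm\to G$ associated with $X$ (this exists by \ref{assoc-cochar}) and put $H=d\lambda(1)\in\Lie(G)$. Write $\Lie(G)=\bigoplus_{i\in\Z}\lie{g}_i$ for the grading with $\lie{g}_i=\{Z : \Ad(\lambda(t))Z=t^iZ \text{ for all }t\in k^\times\}$, so that $X\in\lie{g}_2$, the element $H$ acts on $\lie{g}_i$ by the scalar $i$, and $\Lie C_G(X)\subseteq\bigoplus_{i\ge0}\lie{g}_i$ (that $C_G(X)\subseteq P(\lambda)$ is a standard feature of associated cocharacters). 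We may also pass, via \ref{optimal-central-isogeny}, to a convenient isogeny type of $G$ whenever that helps.

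For existence, note first that since the characteristic is good for $G$ the Jacobson--Morozov theorem applies (cf.\ \cite{premet}): $X$ lies in an $\lie{sl}_2$-triple $(X,H',Y')$. The semisimple element $H'$ is the differential at $1$ of a cocharacter associated with $X$, so by the conjugacy clause of \ref{assoc-cochar} I may conjugate the triple by an element of $R_uC_G(X)$ to arrange $H'=H$; then $Y:=Y'\in\lie{g}_{-2}$ and $(X,H,Y)$ is an $\lie{sl}_2$-triple with $H=d\lambda(1)$. The key point is that this triple is \emph{restricted}: one has $H^{[p]}=H$ because $H=d\lambda(1)$, and --- this is exactly where the hypothesis $X^{[p]}=0$ is used --- one must show $Y^{[p]}=0$. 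Granting this, $s\mapsto\exp(sX)$ and $s\mapsto\exp(sY)$ are well-defined homomorphisms $\G_a\to G$, and I would define $\phi\colon\SL_2\to G$ by sending the diagonal torus via $\lambda$ (with $\operatorname{diag}(t,t^{-1})\mapsto\lambda(t)$), the upper unipotent subgroup by $s\mapsto\exp(sX)$, and the lower unipotent subgroup by $s\mapsto\exp(sY)$. That this is a well-defined homomorphism is checked through Steinberg's defining relations for the simply connected group of type $A_1$: in a faithful representation of $G$ the operators $X$ and $Y$ have all Jordan blocks of size $\le p$, the $\lambda$-grading rigidifies the resulting $\lie{sl}_2$-module structure, and each relation reduces to the corresponding identity in $\SL_2(k)$. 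The resulting $\phi$ has $d\phi(X_0)=X$ and restricts to $\lambda$ on the diagonal torus, and since $\lambda$ is associated with $X$, $\phi$ is optimal.

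For uniqueness, let $\phi'$ be another optimal homomorphism with $d\phi'(X_0)=X$. By definition of optimality the cocharacters $\lambda,\lambda'$ obtained by restricting $\phi,\phi'$ to the diagonal torus are both associated with $X$, so by \ref{assoc-cochar} there is a unique $u\in R_uC_G(X)$ with $\lambda'=\Int(u)\circ\lambda$. Replacing $\phi$ by $\Int(u)\circ\phi$ --- which leaves $d\phi(X_0)=X$ unchanged, as $u$ centralizes $X$ --- I may assume $\phi$ and $\phi'$ restrict to the same cocharacter $\lambda'$ on the diagonal torus. Then $\phi$ and $\phi'$ agree on the upper unipotent subgroup, both being the unique $\lambda'$-equivariant homomorphism $\G_a\to G$ of weight $2$ with differential $X$. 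Writing $Y_0$ for the lower-triangular nilpotent of $\Lie(\SL_2)$, the elements $d\phi(Y_0)$ and $d\phi'(Y_0)$ both lie in $\lie{g}_{-2}$ and both bracket with $X$ to $d\lambda'(1)$, so their difference lies in $\lie{g}_{-2}\cap\Lie C_G(X)=0$; hence $\phi$ and $\phi'$ agree on the lower unipotent subgroup as well, and therefore on all of $\SL_2$, which is generated by its two unipotent subgroups. Thus $\phi'=\Int(u)\circ\phi$, and the uniqueness of $u$ is again immediate from \ref{assoc-cochar}, since any conjugating element must carry $\phi|_T=\lambda$ to $\phi'|_T=\lambda'$.

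The step I expect to be the real obstacle is the restrictedness of the triple, i.e.\ the claim $Y^{[p]}=0$, together with the attendant verification that the Lie-algebra data actually integrates; this is precisely the content that separates ``optimal $\SL_2$'' from ``$\lie{sl}_2$-triple'', and it fails in general without $X^{[p]}=0$. One route: use \ref{assoc-cochar}(ii) to reduce to the case where $X$ is distinguished --- replacing $G$ by $M=C_G(S)$ for $S$ a maximal torus of $C_G(X)$, and checking that an optimal $\SL_2$ for $X$ in $M$ remains optimal in $G$. For $X$ distinguished the associated cocharacter is even, distinguished nilpotents are determined up to conjugacy by their associated cocharacters, and $Y$ (which is distinguished with associated cocharacter $-\lambda$) is therefore $G$-conjugate to an automorphic image $\tau(X)$ of $X$; hence $Y^{[p]}$ is conjugate to $\tau(X^{[p]})=0$. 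Alternatively one may invoke the saturation machinery for unipotent elements $u$ with $u^p=1$. Either way, once $Y^{[p]}=0$ is in hand I would regard the remaining Steinberg-relation bookkeeping, and the check that the constructed cocharacter is associated, as routine; so this is where I would concentrate the effort.
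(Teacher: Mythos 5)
First, a point of comparison: the paper supplies no proof of this statement at all --- Theorem \ref{th:mcninch-optimal} is quoted verbatim from \cite{mcninch-optimal}, Theorem 44, and used as a black box --- so you are attempting to prove something the authors only cite. Your uniqueness argument is close to a complete one and is in the right spirit: reduce to the conjugacy clause of \ref{assoc-cochar}, then compare the two homomorphisms on the unipotent root subgroups. But it silently uses two nontrivial inputs. The claim that a $\lambda'$-equivariant homomorphism $\G_a \to G$ of weight $\pm 2$ is determined by its differential requires $\lie{g}(i)=0$ for $|i|\ge 2p$; otherwise $s\mapsto \exp(sX)\exp(s^pX_1)\cdots$ with $X_1\in\lie{g}(2p)$ is equivariant of weight $2$ with the same differential. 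That vanishing is exactly Proposition 24 of \cite{mcninch-optimal} under the hypothesis $X^{[p]}=0$ (it is quoted later in this paper, in the proof of Theorem \ref{th:nilpotent orbit Donkin pairs}), so it is available --- but you must invoke it. You also use $\lie{g}(-2)\cap \Lie C_G(X)=0$ together with the identification of $\Lie C_G(X)$ with the Lie-algebra centralizer, i.e.\ smoothness of $C_G(X)$, which again needs to be cited for good $p$.

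The genuine gap is in the existence half, and it sits precisely where you say you ``would concentrate the effort,'' i.e.\ it is not resolved. Three things are deferred: (i) the existence of an $\lie{sl}_2$-triple through $X$ is not the classical Jacobson--Morozov theorem in good characteristic (Premet's result cited via \ref{assoc-cochar} produces an associated \emph{cocharacter}, not a triple; producing $Y\in\lie{g}(-2)$ with $[X,Y]=d\lambda(1)$ requires a separate surjectivity argument); (ii) the claim $Y^{[p]}=0$, where your route --- conjugating $Y$ to an automorphic image of $X$ via the Bala--Carter parametrization of distinguished orbits --- is plausible but uncomfortably close to circular, since the cleanest witness to the conjugacy of $X$ and $Y$ is the very $\SL_2$ you are trying to construct; and (iii), most seriously, the integration step. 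A restricted $\lie{sl}_2$-triple in $\Lie(G)$ does not automatically exponentiate to a homomorphism $\SL_2\to G$ in characteristic $p$, and verifying the Steinberg relations is not ``routine bookkeeping'': it is the actual content separating Theorem \ref{th:mcninch-optimal} from a Lie-algebra statement. The proof in \cite{mcninch-optimal} does not proceed by checking relations; it rests on Seitz's construction of good $A_1$-subgroups through unipotent elements of order $p$ (saturation and lifting to a complete discrete valuation ring of characteristic zero), combined with the reduction to the distinguished case that you correctly anticipate. As written, your existence argument assumes the theorem at step (iii).
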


The image $S = \operatorname{image}(\phi)$ of an optimal
$\SL_2$-homomorphism will be called an \emph{optimal} $\SL_2$-subgroup
(although of course it may be that $S \simeq \operatorname{PSL}_2$ is
the adjoint group).

One says that an optimal $\SL_2$-subgroup $S$ of $G$ is
\emph{distinguished} just in case a nilpotent element $0 \ne X \in
\Lie(S) \subseteq \Lie(G)$ is distinguished.

\begin{stmt}
 \label{distinguished-optimal-sl2s}
 If $S$ is an optimal $\SL_2$-subgroup of a quasisimple group $G$,
 then $S$ is distinguished if and only if a maximal torus of the
 centralizer $C_G(S)$ is trivial.
\end{stmt}

\begin{proof}
First suppose that $S$ is distinguished.  Let $0 \ne X \in
\Lie(S)$. Since $X$ is distinguished, by definition a maximal torus
$T$ of $C_G(X)$ is central in $G$; since $G$ is quaisimple, $T=1$.
It follows from \cite{mcninch-optimal}, Cor. 43 that $C_G(S)
\subset C_G(X)$ so indeed a maximal torus of $C_G(S)$ is trivial.

Conversely, suppose that a maximal torus of $C_G(S)$ is trivial and
let $T \subset C_G(X)$ be a maximal torus. To show that $X$ is
distinguished, we must argue that $T$ is trivial.  Write $\lambda$
for the cocharacter of $G$ obtained by the inclusion of the maximal
torus of $S$.  By the conjugacy of maximal tori in $C_G(X)^0$, we may
suppose that $T$ is centralized by the image of $\lambda$.  But then
\cite{mcninch-optimal}, Cor. 43 shows that $T$ centralizes $S$, so
indeed $T =1$.
\end{proof}

\section{The groups of interest} \label{sec:the groups of interest}

\subsection{``Classical'' groups}
\label{classical-groups}
Let $V$ be a vector space, and let $\beta$ be a non-degenerate
bilinear form on $V$.  Write
\begin{equation*}
\Omega(V) = \Omega(V,\beta) = \Stab_{\GL(V)}(\beta)^0
\end{equation*}
for the identity component of the stabilizer of $\beta$ in
$\GL(V)$. There are two cases of interest to us:
\begin{enumerate}[(a)]
\item If $\beta$ is alternating, then $\Omega(V) = \Symp(V)$ is a
 symplectic group.
\item If $\beta$ is symmetric, then $\Omega(V) = \SOrth(V)$ is a
 special orthogonal group, when $p > 2$.
\end{enumerate}
The definition of the special orthogonal group requires more care when
$p=2$, and we ignore this issue.

In this paper, a \emph{classical group} will mean a group of the form
\begin{equation*}
 G = \SL(V) 
\quad \text{or} \quad G=\Omega(V,\beta)
\text{ in case (a) or (b) for $p > 2$.}
\end{equation*}

\subsection{Good filtration subgroups of classical groups}
\label{classical groups and their subgroups}
Let $G$ be a classical group as in \S \ref{classical-groups}.  We say
that $V$ is the \emph{natural representation} of the classical group
$G$. We will often just write $\Omega(V)$ instead of $\Omega(V,
\beta)$.

\begin{stmt}
 Let $G$ be a classical group with natural representation $V$. Then $V \simeq \ho{G}{\lambda}$ for some dominant weight $\lambda$, and $V$ is simple.
\end{stmt}

\begin{proof}
 See, for example, \cite{JRAG}, \S II.2.16, 17, 18. The simplicity of
 $V$ when $G = \Omega(V,\beta)$ for $\beta$ symmetric depends in general on the assumption
 that $p \ne 2$.
\end{proof}

Recall that the fundamental dominant weights of $G$ are denoted by
$\varpi_1, \ldots, \varpi_r$ where $r$ is the rank of the semisimple
group $G$; in general $\varpi_i \in X^*(T) \tensor \Q$. In what
follows, we abbreviate $X := X^*(T)$.

\begin{stmt}
\label{SLV-description}
Let $G = \SL(V)$, where $\dim V = r+1$. Then $G$ is a simply
connected group with root system of type $A_r$, and we have
\begin{equation*}
 \ho{ \SL(V) }{\varpi_i} \simeq \exterior^i V \quad \text{for} 
 \quad 1 \le i \le r.
\end{equation*}
\end{stmt}

\begin{proof}
 \cite{involutions}, Theorem (25.9) identifies the root system of
 $\SL(V)$. Apply e.g. \cite{donkin}(4.1.1) for the assertion about
 exterior powers.
\end{proof}

\begin{stmt}
\label{SpV-description}
Assume that $\beta$ is alternating and non-degenerate and that $\dim
 V = 2r$. Then $\Omega(V)=\Symp(V)$ is a simply connected quasi-simple
group with root system of type $C_r$ and the exterior powers
$\exterior^j V$ have a good filtration as $\Omega(V)$-module. When $p > 2$, for
each $1 \le i \le r$ there is an exact sequence
\begin{equation*}
  0 \to \exterior^{i-2} V \to \exterior^i V \to \ho{\Symp(V)}{\varpi_i} \to 0
\end{equation*}
of $\Omega(V)$-modules (where $\exterior^{-1} V = 0$ and
$\exterior^0 V = k$).
\end{stmt}

\begin{proof}
It follows from \cite{involutions}, Theorem 25.11 that $\Omega(V)
= \Symp(V)$ is quasisimple of type $C_r$. For $p>2$ the results
on exterior powers of $V$ follow from \cite{andersen-jantzen} 4.9. The fact that the modules $\exterior^j V$ have a good filtration when $p=2$ follows from \cite{Do94}, Appendix A.
\end{proof}


\begin{stmt}
\label{odd-orthog-description}
Assume that $\beta$ is symmetric and non-degenerate, that $p>2$ and
that $\dim V = 2r+1$. Then
\begin{enumerate}[(a)]
\item $\Omega(V) = \SOrth(V)$ is a quasisimple group of type $B_r$.
\item $\varpi_i \in X_{\SOrth(V)}$ for $1 \le i \le r-1$, $2\varpi_r \in X_{\SOrth(V)}$
and for any $\lambda \in X_{\SOrth(V)}$, we have 
\begin{equation*}
\lambda \in X^+_{\SOrth(V)} \quad \text{if and only if} \quad
\lambda \in \sum_{i=1}^{r-1} \Z_{\ge 0} \, \varpi_i + \Z_{\ge 0} \, 2\varpi_r.
  \end{equation*}
\item For $1 \le i \le r-1$, $\ho{\SOrth(V)}{\varpi_i} \simeq
  \exterior^i V$, and $\ho{\SOrth(V)}{2\varpi_r} \simeq \exterior^r
  V$.
\end{enumerate}
\end{stmt}

\begin{proof}
(a) follows from \cite{involutions}, Theorem 25.10.  Moreover, by
\emph{loc. cit.} one knows that $\Omega(V)$ is not simply connected.
Now (b) follows from the description in \cite{bourbaki}, Plate II.
Finally, (c) is verified in \cite{andersen-jantzen}, 4.9.
\end{proof}

\begin{stmt}
\label{even-orthog-description}
Assume that $\beta$ is symmetric and non-degenerate, that $p>2$, and
that $\dim V = 2r$. Then
\begin{enumerate}[(a)]
\item $\Omega(V) = \SOrth(V)$ is a quasisimple group of type $D_r$.
\item $\varpi_i \in X_{\SOrth(V)}$ for $1 \le i \le r-2$, $\varpi_{r-1} +
 \varpi_r, 2\varpi_{r-1}, 2\varpi_r \in X_{\SOrth(V)}$ and for any $\lambda \in
 X_{\SOrth(V)}$, we have 
\begin{equation*}
 \lambda \in X^+_{\SOrth(V)} \quad \text{if and only if} \quad
 \lambda \in \sum_{i=1}^{r-2} \Z_{\ge 0} \, \varpi_i + \Z_{\ge 0}(\varpi_{r-1} + \varpi_r) + 
 \Z_{\ge 0} \, 2\varpi_{r-1} + \Z_{\ge 0} \, 2\varpi_r.
  \end{equation*}
\item For $1 \le i \le r-2$, $\ho{\SOrth(V)}{\varpi_i} \simeq
  \exterior^i V$. Moreover,
  \begin{equation*}
    \exterior^{r-1}V \simeq \ho{\SOrth(V)}{\varpi_{r-1} + \varpi_r}
    \quad \text{and} \quad
    \exterior^r V \simeq \ho{\SOrth(V)}{2\varpi_r} \oplus \ho{\SOrth(V)}{2\varpi_{r-1}}.
  \end{equation*}
\end{enumerate}
\end{stmt}

\begin{proof}
(a) follows from \cite{involutions}, Theorem 25.12.  Moreover, by
\emph{loc. cit.} one knows that $\Omega(V)$ is neither simply
connected nor adjoint.  In this case, there are three groups in the
isogeny class which are neither adjoint nor simply connected;
$\Omega(V)$ is characterized by the fact that $X$ contains neither
$\varpi_{r-1}$ nor $\varpi_r$.

Now (b) follows from the description in \cite{bourbaki}, Plate IV.
The assertions in (c) about $\exterior^i V$ for $i < r$ are verified
in \cite{andersen-jantzen}, 4.9. The assertion about $\exterior^r
V$ is proved in \cite{mcninch-exterior}, Remark 3.4.
\end{proof}

We conclude this discussion with the following result, which is similar
to the methods used in \cite{donkin} and \cite{brundan}.
\begin{stmt}
\label{check-Donkin-pair-on-exterior-powers}
Let $G$ be a classical group with natural representation $V$ and
assume that $p > 2$ if $G \neq \SL(V)$. Let $H \subset G$ be a
connected and reductive subgroup.  Then $(G, H)$ is a Donkin pair if
and only if the exterior algebra $\exterior^\bullet V$ has a good
filtration as an $H$-module.
\end{stmt}

\begin{proof}
 Together with \ref{direct-summand-good-filt}, the descriptions found
 in \ref{SLV-description}, \ref{SpV-description},
 \ref{odd-orthog-description}, and \ref{even-orthog-description}
 show that the exterior algebra $\exterior^\bullet V$ has a good
 filtration as $G$-module.  Thus if $(G,H)$ is a Donkin pair then $\exterior^\bullet V$ has a good
 filtration as $H$-module.

 Conversely, suppose that $\exterior^\bullet V$ has a good filtration as
 $H$-module.  Of course, according to \ref{direct-summand-good-filt},
 also each exterior power $\exterior^i V$ has a good filtration as
 $H$-module.

 To show that $(G,H)$ is a Donkin pair, we are going to apply
 \ref{Donkin-pair-by-set-of-weights}. We verify for each classical
 group $G$ that there is a set of dominant weights
 $\lambda_1,\dots,\lambda_t \in \goodwt(G,H)$ for which $\lambda \in
 X$ satisfies
 \begin{equation*}
   (*) \quad \lambda \in \dom G \quad \text{if and only if} \quad
   \lambda \in \sum_{i=1}^t \Z_{\ge 0} \lambda_i \,.
 \end{equation*}

 When $G = \SL(V)$, with $\dim V = r+1$, we take $t=r$ and we let
 $\lambda_i = \varpi_i$ for $1 \le i \le r$. According to
 \ref{SLV-description}
 \begin{equation*}
   \ho{\SL(V)}{\lambda_i} = \ho{\SL(V)}{\varpi_i} \simeq \exterior^i V
 \end{equation*}
 which has a good $H$-module filtration by assumption.

 Now let $\beta$ be non-degenerate and alternating and let $\dim V =
 2r$. Then \ref{SpV-description} shows that $G=\Omega(V)$ is again
 simply connected; again we take $t=r$ and we let $\lambda_i =
 \varpi_i$ for $1 \le i \le r$. Applying \ref{SpV-description}, we
 have for each $1 \le i \le r$ an exact sequence of $H$-modules
\begin{equation*}
  0 \to \exterior^{i-2} V \to \exterior^i V \to 
  \res^{\Omega(V)}_H \ho{\Omega(V)}{\varpi_i} \to 0.
\end{equation*}
Applying \ref{direct-summand-good-filt}(a) together with our
assumption, we conclude that
$\res^{\Omega(V)}_H\ho{\Omega(V)}{\varpi_i}$ has a good $H$-module
filtration for all $1 \leq i \leq r$.

Now let $\beta$ be symmetric and non-degenerate and let $\dim
V = 2r+1$. We take $t=r$ and set $\lambda_i = \varpi_i$ for $i<r$
and $\lambda_r = 2\varpi_r$. Then \ref{odd-orthog-description} shows
that the $\lambda_i$ satisfy $(*)$. Moreover, the same result shows
that
\begin{equation*}
  \res^{\Omega(V)}_H \ho{\Omega(V)}{\lambda_i} \simeq \exterior^i V
\end{equation*}
has a good $H$-module filtration by assumption.

Finally, let $\beta$ be symmetric and non-degenerate and let $\dim V
= 2r$. We take $t = r+1$, we set $\lambda_i = \varpi_i$ for $i <
r-1$, and we set $\lambda_{r-1} = \varpi_r + \varpi_{r+1}$,
$\lambda_r = 2\varpi_r$, and $\lambda_{r+1} = 2\varpi_{r-1}$.  It
follows from \ref{even-orthog-description} that $(*)$ holds.  Now,
the same result shows that the $H$-modules
\begin{equation*}
  \res^{\Omega(V)}_H \ho{\Omega(V)}{\lambda_i} \simeq \exterior^i V 
  \quad (1 \le i \le r-2), \quad \text{and} \quad
  \res^{\Omega(V)}_H \ho{\Omega(V)}{\lambda_{r-1}} \simeq \exterior^{r-1} V
\end{equation*}
have a good $H$-filtration by assumption. Finally, since by
\ref{even-orthog-description} we have 
\begin{equation*}
  \exterior^r V \simeq
  \res^{\Omega(V)}_H \ho{\Omega(V)}{2\lambda_r} \oplus
  \res^{\Omega(V)}_H \ho{\Omega(V)}{2\lambda_{r-1}}
\end{equation*}
we conclude via \ref{direct-summand-good-filt}(b) that
$\res^{\Omega(V)}_H \ho{\Omega(V)}{2\lambda_j}$ has a good
filtration as $H$-module for $j=r,r+1$. This completes the proof.
\end{proof}

Recall from \ref{SLV-description} and\ref{SpV-description} that
$\SL(V)$ is simply connected, and $\Omega(V)$ is simply connected if
$\beta$ is alternating. In order to study good filtration subgroups of
the simply connected covers of orthogonal groups, one may use the
following:

\begin{stmt}
 \label{donkin-subgroups-of-spin}
 Suppose that $p > 2$ and let $G = \Omega(V,\beta)$ for a
 non-degenerate symmetric form $\beta$.  Let $\pi:G_{\sconn} \to G$ be
 the \emph{simply connected} covering group of $G$, and let $H
 \subset G_{\sconn}$ be a connected and reductive subgroup.  Then
 $(G_{\sconn},H)$ is a Donkin pair if and only if $(G,\pi H)$ is a
 Donkin pair and one of the following conditions holds:
 \begin{enumerate}[(a)]
 \item $\dim V = 2r+1$ is odd and $\res^{G_{\sconn}}_{H}
   \ho{G_{\sconn}}{\varpi_r}$ has a good filtration, with numbering
   of the fundamental dominant weights as in
   \ref{odd-orthog-description}, or
 \item $\dim V = 2r$ is even and $\res^{G_{\sconn}}_{H}
   \ho{G_{\sconn}}{\varpi_i}$ has a good filtration for $i=r-1,r$,
   with numbering of the fundamental dominant weights as in
   \ref{even-orthog-description}.
 \end{enumerate}
\end{stmt}

\begin{proof}
 We apply \ref{Donkin-pair-by-set-of-weights} using the set of
 fundamental dominant weights as in Remark
 \ref{simply-connected-weight-lattice-donkin-pair-criteria}.  The
 result now follows from the descriptions found in \ref{odd-orthog-description} and
 \ref{even-orthog-description} together with \ref{check-Donkin-pair-on-exterior-powers}.
\end{proof}

\subsection{Some reductive subgroups of a classical group}
Let $V$ be a finite dimensional $k$-vector space, let $\beta$ be a
non-degenerate alternating or symmetric bilinear form on $V$ and
suppose that $p > 2$. As above put $\Omega(V) = \Omega(V,\beta)$.

Let $W \subset V$ be a linear subspace of $V$. We say that $W$ is
\emph{non-degenerate} if the restriction of $\beta$ to $W$ is
non-degenerate, and we say that $W$ is \emph{isotropic} if the
restriction of $\beta$ to $W$ is identically zero.

\begin{stmt}
 Let $W_1,\dots,W_r \subset V$ be non-degenerate subspaces with
 $\beta(W_i,W_j)=0$ for $i \ne j$. Then
 \begin{equation*}
   (\Omega(V),\Omega(W_1) \times \cdots \times \Omega(W_r))
 \end{equation*}
 is a Donkin pair.
\end{stmt}

\begin{proof}
 \cite{brundan}, Prop. 3.3; here we are using that $p > 2$.
\end{proof}

\begin{stmt}
 \label{levi-of-classical}
 Let $P \subset \Omega(V)$ be a parabolic subgroup and let $L \subset P$
 be a Levi factor. Then there is a non-degenerate subspace $W \subset
 V$ and an isotropic subspace $U \subset V$ such that $L \simeq L_1
 \times L_2$ where $L_1$ is equal to $\Omega(W)$ and $L_2$ is a Levi
 factor of a parabolic subgroup of $\SL(U)$.
\end{stmt}

\begin{proof}
 The result follows from the well-known observation that a parabolic
 subgroup of $\Omega$ is the stabilizer of a flag of isotropic
 subspaces of $V$.
\end{proof}

\subsection{Some modules for $\SL_2$ having a good filtration}

We write $S$ for the simple algebraic group $\SL_2$.

Let $\varpi \in \dom S$ denote the fundamental weight for some choice of maximal torus of Borel subgroup of $S$. For any integer $n \ge 0$ set $\ho{S}{n}:= \ho S{n\varpi}$, the unique induced $S$-module having dimension $n + 1$.

\begin{stmt}
 \label{SL2-simple-hos}
 The $S$-module $\ho{S}{n}$ is simple if and only if $n < p$. 
\end{stmt}

\begin{proof}
 This is a consequence of the Linkage Principle \cite{JRAG}, \S II.6.
\end{proof}

A semisimple $S$-module $V$ will be called \emph{restricted} (or
\emph{restricted semisimple}) if $\dim_k V^S = \dim_k V^{\Lie(S)}$.
\begin{stmt}
 \label{restricted-ss-for-sl2}
 The following are equivalent for an $S$-module $V$.
 \begin{enumerate}[(a)]
 \item   $V$ is a restricted semisimple $S$-module 
 \item There is an isomorphism $V \simeq
   \bigoplus_{i \in I} \ho{S}{n_i}$ where $0 \le n_i < p$ for all $i
   \in I$
 \item If $V_{m\varpi} \ne 0$ then $m < p$.
 \end{enumerate}
\end{stmt}

\begin{proof}
 First, suppose that $L$ is a simple $S$-module. If the highest
 weight $n\omega$ of $L$ satisfies $n<p$, it follows from
 \cite{JRAG}, \S II.3.15 that $L$ is simple as a module for $\Lie(S)$
 as well.  For any simple $S$-module $L$, it now follows from
 Steinberg's Tensor Product Theorem \cite{JRAG}, II.3.17 that $L$ is
 semisimple as a $\Lie(S)$-module. If $n\omega$ is the highest weight
 of $L$, Steinberg's Tensor Product Theorem yields $L^S =
 L^{\Lie(S)}$ if and only $n<p$.  It is then clear that (a) implies
 (b). If (b) holds, evidently (c) holds as well.

 Finally, if (c) holds, it follows from \ref{SL2-simple-hos} that
 the simple submodules of $V$ all have the form $L(n) = \ho{S}{n}$
 for $n<p$. The Linkage Principle \cite{JRAG}, II.6 shows that
 $\Ext^1_S(L(n),L(m)) = 0$ whenever $0 \le n,m < p$ so indeed $V$ is
 semisimple and (a) follows.
\end{proof}

\begin{stmt}
 \label{exterior-of-restricted-semisimple-for-SL2} 
 Let $E$ be a finite dimensional restricted semisimple $S$-module.
 Then $\exterior^\bullet E$ has a good filtration as an $S$-module.
\end{stmt}

\begin{proof}
 By hypothesis and \ref{restricted-ss-for-sl2}, we may write $E =
 \bigoplus_{i=1}^r E_i$ where $E_i \simeq \ho{S}{n_i}$ with $0 \le
 n_i < p$ $i=1,\dots,r$.  Then $\dim E_i \le n_i +1 \le p$ for each
 $i$.

 We may evidently view $\exterior^\bullet E_i$ as an $S$-submodule of
 $\exterior^\bullet E$ for each $i$, and multiplication in the algebra
 $\exterior^\bullet E$ defines a $S$-module isomorphism
  \begin{equation*}
    \exterior^\bullet E_1 \tensor \exterior^\bullet E_2 \tensor  
    \cdots \tensor   \exterior^\bullet E_r
    \xrightarrow{\sim}
    \exterior^\bullet E
  \end{equation*}
  In view of \ref{tensor-good-filtration}, it suffices to prove the
  claim when $E = \ho{S}{n}$ for some $n < p$.

  Now, $\exterior^\bullet E = \bigoplus_{i=0}^{n+1} \exterior^i E$ as
  $S$-modules, so it is enough to see that $\exterior^i E$ has a good
  filtration as $S$-module for each $0 \le i \le n+1$.  Since $E$ has
  a good filtration as $S$-module, \ref{multilinear} shows that
  $\exterior^i E$ has a good filtration as $S$-module for each $i <
  p$. This completes the proof if $n < p-1$ since $\dim \ho S n = n + 1$.  If $n = p-1$, it only
  remains to note that $\dim E = p$ so that $\exterior^p E = k =
  \ho{S}{0}$ has a good filtration as $S$-module.
\end{proof}

\subsection{Groups of exceptional type} 
\label{sec:the exceptional case}

Let $G$ be a \emph{simply connected}, quasisimple algebraic group
whose root system $R$ is of exceptional type of rank $r$: i.e. $R$ is
one of $G_2,F_4,E_6,E_7,$ or $E_8$. We always number the simple roots
of $R$ according to the tables found in \cite{bourbaki}, Plate I-IX.

We begin with the following observations

\begin{stmt}
 \label{Levi factors of exceptional-type groups}
 Let $L$ be a Levi factor of a parabolic subgroup of $G$. Then $L$ is
 isomorphic to a direct product $L \simeq L_1 \times \cdots \times
 L_t$ where for each $1 \le i \le t$, one of the following holds:
 \begin{enumerate}[(a)]
 \item $L_i$ is a torus, or
 \item $L_i$ a simply connected quasisimple group of exceptional
   type, or
 \item $L_i$ is a simply connected quasisimple group of type $A_r$
   with $r \le 7$, or
 \item $G$ has type $E_d$ for $d=6,7,$ or $8$ and $L_i$ is a simply
   connected quasisimple group of type $D_r$ with $r \le d - 1$, or
 \item $G$ has type $F_4$ and $L_i$ is a simply connected quasisimple
   group of type $B_2 = C_2$, $B_3$ or $C_3$.
 \end{enumerate}
\end{stmt}

\begin{proof}
 Since $G$ is simply connected, $L$ is also simply connected, so that
 $L$ is isomorphic to the product of its connected center and its
 simply connected derived group $L'$. The result now follows by
 inspection of the Dynkin diagram of $G$.
\end{proof}

\subsection{Good filtration subgroups of a group of exceptional type}

Let $W_p$ be the \emph{affine Weyl group} associated with $G$
\cite{JRAG}, \S II.6. We consider the so-called ``dot-action'' of
$W_p$ on the weight lattice $X = X^*(T)$: for $w \in W_p$ and $\mu \in
X$ we have $w \bullet \mu = w(\mu - \rho) + \rho$ where $\rho =
\frac{1}{2}\sum_{\alpha >0} \alpha$ is the half-sum of positive
roots.

For a reductive subgroup $H \subset G$, recall that $\goodwt(G,H)$
denotes the set of dominant weights for $G$ for which  the $H$-module
$\res^G_H\ho{G}{\lambda}$ has a good filtration. Recall also that for a $G$-module $V$ with a good filtration, we write $\supp(V)$ for the set of $\lambda \in \dom G$ for which
$\ho{G}{\lambda}$ occurs as a layer in a good filtration of $V$. We first recall the following fact from \cite{vdK}.

\begin{stmt} \label{vdk-lemma} Let $M$ be a $G$-module with good
 filtration for which $\res^G_H M$ has a good filtration as
 $H$-module.  Let $\lambda \in \supp(M)$.  Suppose that one of the
 following holds for each $\mu \in \supp(M)$ with $\mu \ne \lambda$:
\begin{enumerate}[(a)]
\item $\mu < \lambda$ and $\mu \in \goodwt(G,H)$, or
\item $\mu \not \in W_p \bullet \lambda$.
\end{enumerate}
Then $\lambda \in \goodwt(G,H)$.
\end{stmt}

\begin{proof}
This is Lemma 6.3, \cite{vdK}.
\end{proof}

For any $G$-module $M$, let $\chi(M)$ denote the character of $M$; see
\cite{JRAG}, \S I.2.11 and II.5. For $\lambda \in \dom G$ set
$\chi(\lambda) := \chi(\ho{G}{\lambda})$.

\begin{stmt}
 \label{good-filtration-layers-via-character}
 The characters $\chi(\lambda)$ for $\lambda \in \dom G$ form a
 $\Z$-basis of $\Z[T]^W$. In particular, if the finite dimensional
 $G$-module $M$ has a good filtration, then $\chi(M) = \sum_{\lambda
   \in \dom G} n_\lambda \chi(\lambda)$ where $n_\lambda \in \Z_{\ge
   0}$ is equal to the number of layers in a good filtration of $M$
 which are isomorphic to $\ho{G}{\lambda}$.
\end{stmt}

\begin{proof}
 This follows from \cite{JRAG}, Lemma and Remark II.5.8.
\end{proof}

The following result may be viewed as a sharpened version of
\ref{Donkin-pair-by-set-of-weights} valid for exceptional groups.

\begin{theorem} 
\label{th:exceptional group Donkin pairs}
Let $G$ be a simply connected quasisimple group of rank $r$ with
exceptional type root system, and let $H \subset G$ be a reductive
subgroup of $G$.
\begin{enumerate}[(i)]
\item If $G$ is of type $F_4$, $p \geq 5$, and $\varpi_4 \in
  \goodwt(G,H) $, then $(G,H)$ is a Donkin pair.
\item If $G$ is of type $E_6$, $p \geq 5$, and $\varpi_1,
  \varpi_6 \in \goodwt(G,H) $, then $(G,H)$ is a Donkin pair.
\item If $G$ is of type $E_7$, $p \notin \{ 2, 5, 7 \}$, and $\varpi_1, 
  \varpi_7 \in \goodwt(G,H) $, then $(G,H)$ is a Donkin pair.
\item If $G$ is of type $E_8$, $p \geq 7$, and $\varpi_1,
  \varpi_8 \in \goodwt(G,H) $, then $(G,H)$ is a Donkin pair.
\end{enumerate}
\end{theorem}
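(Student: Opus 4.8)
The plan is to reduce, via \ref{Donkin-pair-by-set-of-weights}, to a statement about finitely many weights. Since $G$ is simply connected, the fundamental dominant weights $\varpi_1,\dots,\varpi_r$ satisfy the hypothesis $(*)$ of \ref{Donkin-pair-by-set-of-weights} (see \ref{simply-connected-weight-lattice-donkin-pair-criteria}), so it is enough to prove that every $\varpi_i$ lies in $\goodwt(G,H)$, given that the $\varpi_j$ named in the hypothesis already do. The mechanism for producing new elements of $\goodwt(G,H)$ is the following. Suppose $\mu_1,\dots,\mu_k\in\goodwt(G,H)$ and set $M:=\ho{G}{\mu_1}\tensor\cdots\tensor\ho{G}{\mu_k}$, or, when $k=1$ and $m<p$, one of the direct summands $\exterior^m\ho{G}{\mu_1}$ or $\Sym^m\ho{G}{\mu_1}$ of $\ho{G}{\mu_1}^{\tensor m}$. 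By \ref{tensor-good-filtration} (respectively \ref{multilinear}) the $G$-module $M$ has a good filtration; and $\res^G_H M$, being a tensor product, respectively an exterior or symmetric power, of $H$-modules each of which has a good filtration, also has a good filtration as $H$-module by \ref{tensor-good-filtration} (respectively \ref{multilinear}). Hence \ref{vdk-lemma} applies to $M$: if $\lambda\in\supp(M)$ and every $\mu\in\supp(M)\setminus\{\lambda\}$ satisfies either $\mu<\lambda$ with $\mu\in\goodwt(G,H)$, or $\mu\notin W_p \bullet \lambda$, then $\lambda\in\goodwt(G,H)$.

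Two observations make this scheme effective. First, by \ref{good-filtration-layers-via-character} the set $\supp(M)$ and the multiplicities of the layers of $M$ are determined by the character of $M$, which --- being a product of Weyl characters, or an elementary symmetric, respectively complete homogeneous, function of the weights of $\ho{G}{\mu_1}$ --- is independent of $p$. Thus $\supp(M)$ is exactly what one reads off from the corresponding decomposition in characteristic zero, which is tabulated. Second, for the small induced modules appearing in the hypotheses --- the $26$-dimensional $\ho{G}{\varpi_4}$ in type $F_4$, the two $27$-dimensional modules $\ho{G}{\varpi_1}$ and $\ho{G}{\varpi_6}$ in type $E_6$, the $56$-dimensional $\ho{G}{\varpi_7}$ and the adjoint module $\ho{G}{\varpi_1}$ in type $E_7$, and the adjoint module $\ho{G}{\varpi_8}$ together with $\ho{G}{\varpi_1}$ in type $E_8$ --- the exterior and symmetric powers of low degree (all $<p$) decompose over characteristic zero into induced modules whose highest weights are fundamental or nearly so, and one then treats the remaining $\varpi_i$ in an order compatible with the dominance order. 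For instance, in type $F_4$ one has, as good-filtration $G$-modules, $\exterior^2\ho{G}{\varpi_4}\iso\ho{G}{\varpi_3}\oplus\ho{G}{\varpi_1}$ and $\exterior^2\ho{G}{\varpi_1}\iso\ho{G}{\varpi_2}\oplus\ho{G}{\varpi_1}$; applying the mechanism to the first gives $\varpi_1\in\goodwt(G,H)$ (using $\varpi_3\notin W_p \bullet \varpi_1$) and then $\varpi_3\in\goodwt(G,H)$ (using $\varpi_1<\varpi_3$ and $\varpi_1\in\goodwt(G,H)$), and applying it to the second gives $\varpi_2\in\goodwt(G,H)$ (using $\varpi_1<\varpi_2$ and $\varpi_1\in\goodwt(G,H)$). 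The cases $E_6$, $E_7$, $E_8$ are handled by analogous chains of tensor, exterior, and symmetric powers; for $E_8$ the step $\exterior^2\ho{G}{\varpi_8}\iso\ho{G}{\varpi_7}\oplus\ho{G}{\varpi_8}$ gives $\varpi_7$ for free, but the step from $\Sym^2\ho{G}{\varpi_8}$ that would produce $\varpi_1$ is not available --- which is why $\varpi_1$ is assumed.

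The main obstacle, and the origin of the prime hypotheses, is the non-linkage verification: for each of the finitely many pairs $(\lambda,\mu)$ arising above in which $\mu$ is \emph{not} strictly below $\lambda$, one must establish $\mu\notin W_p \bullet \lambda$, i.e. that $\mu$ and $\lambda$ lie in distinct $W_p$-linkage classes. For small $p$ the weights in question sit in higher $p$-alcoves and do become linked, so the argument genuinely breaks down there; this is what forces the exclusion of $p=3$ in types $F_4$ and $E_6$, of $p\in\{2,5,7\}$ in type $E_7$, and of $p\in\{2,3,5\}$ in type $E_8$. For $p$ in the stated ranges the required non-linkage is a finite check with the dot-action of $W_p$ --- equivalently with the linkage principle and the Jantzen sum formula applied to the relevant $\ho{G}{\lambda}$. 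Assembling, for each exceptional type, the explicit chain of modules, its characteristic-zero decomposition, and these linkage checks is the substance of the proof; everything else follows formally from the results collected in \S\ref{section:preliminaries} and \S\ref{sec:the groups of interest}.
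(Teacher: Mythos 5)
Your proposal is correct and follows essentially the same route as the paper's proof: reduce via \ref{simply-connected-weight-lattice-donkin-pair-criteria} and \ref{Donkin-pair-by-set-of-weights} to showing each $\varpi_i \in \goodwt(G,H)$, then iteratively apply \ref{vdk-lemma} to tensor, exterior, and symmetric powers of modules already known to be good for $H$, reading off $\supp(F)$ from characters via \ref{good-filtration-layers-via-character} and checking the needed non-linkage under $W_p\bullet$, which is exactly where the prime restrictions enter. The only difference is presentational: you spell out the $F_4$ chain while the paper details $E_7$ and delegates the character and linkage computations to LiE.
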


\begin{proof}
This theorem is proved using case-by-case computations with the computer program LiE \cite{lie}. Using \ref{simply-connected-weight-lattice-donkin-pair-criteria}, our goal is to show that $\varpi_i \in \goodwt(G,H)$ for $1 \le i \le r$.  In each of the cases (i)--(iv), the hypothesis gives a set of fundamental weights known to be in $\goodwt(G,H)$.

Next, using induced modules $\ho{G}{\lambda}$ for weights $\lambda$ known to be in $\goodwt(G,H)$, we construct certain modules $F$ for which $F$ has a good filtration as
 $G$-module and $\res^G_H F$ has a good filtration as $H$-module;
 that $F$ and $\res^G_H F$ have the required filtrations will in each
 case be clear from either \ref{tensor-good-filtration} or
 \ref{multilinear}. We then use the character $\chi(F)$ together with
 \ref{good-filtration-layers-via-character} to compute $\supp (F)$; the computation
 of the character of $F$ is achieved in some cases using \cite{lie}.

 We now apply \ref{vdk-lemma} to $F$ to obtain more weights known to lie in $\goodwt(G,H)$ and then we repeat the procedure described above. The proof will be complete once we know that $\varpi_i \in \goodwt(G,H)$ for all $i$.

 We describe details when $G$ has type $E_7$. In this case our initial
 assumption is that $\varpi_1,\varpi_7 \in  \goodwt(G,H)$.  Recall we assume that
 $p \not \in \{ 2, 5, 7 \} $.

 To argue that $\varpi_3 \in \goodwt(G,H)$ we set $F = \exterior^2
 \ho{G}{\varpi_1}$. Since $\chi( F ) = \chi( \varpi_1 ) + \chi(
 \varpi_3 ) $, since $\varpi_1 < \varpi_3 $, and since $\varpi_1 \in
 \goodwt(G,H)$, \ref{vdk-lemma} shows that $\varpi_3 \in \goodwt(G,H)$.

 Next, we set $F = \exterior^2 \ho G {\varpi_7}$. Since $\chi(F) =
 \chi( 0 ) + \chi( \varpi_6 )$, \ref{vdk-lemma} implies that
 $\varpi_6 \in \goodwt(G,H)$.

 Now set $F = \ho G { \varpi_1 } \otimes \ho G { \varpi_7 }$. The
 character of $F$ is given by $\chi(F) = \chi( \varpi_2 ) + \chi(
 \varpi_7 ) + \chi( \varpi_1 + \varpi_7 ) $. We have $ \varpi_7 <
 \varpi_2 < \varpi_1 + \varpi_7$, and $\varpi_2$ is not in the
 $W_p$-orbit of $\varpi_1 + \varpi_7$. Since $\varpi_1, \varpi_7 \in
 \goodwt(G,H)$, \ref{vdk-lemma} implies that $\varpi_2 \in \goodwt(G,H)$.
 
Also note that this implies $\varpi_1 + \varpi_7 \in \goodwt(G,H)$, a fact we will use below when analyzing $\ho G { \varpi_1 } \otimes \ho G { \varpi_2 }$.

 The dominant weights $\mu$ for which $\mu \le \varpi_2 + \varpi_7$
 are precisely $\varpi_2 + \varpi_7, \varpi_1, \varpi_3, \varpi_6,$ and $ 2
 \varpi_7$. Of these, we already know $\varpi_1,\varpi_3,\varpi_6
 \in \goodwt(G,H)$. Setting $F = Sym^2 \ho{G}{\varpi_7}$ it follows from
 \ref{vdk-lemma} that $2\varpi_7 \in \goodwt(G,H)$. A similar argument shows
 that $2\varpi_1 \in \goodwt(G,H)$. Now set $F = \ho{G}{\varpi_2} \tensor
 \ho{G}{\varpi_7}$; for each dominant weight $\mu \ne \varpi_2 +
 \varpi_7$ of $F$ we have $\mu \in \goodwt(G,H)$ so that by \ref{vdk-lemma} we have
 $\varpi_2 + \varpi_7 \in \goodwt(G,H)$.

 Let $F = \exterior^2 \ho G {\varpi_2}$ and observe that $\chi(F) =
 \chi(0) + \chi( \varpi_4 ) + \chi( \varpi_6 ) + \chi(\varpi_2 +
 \varpi_7) + \chi( 2 \varpi_1 ) $. Let $\mu$ be a weight for which
 $\ho{G}{\mu}$ is isomorphic to a layer in a good filtration of $F$;
 then $\mu \leq \varpi_4$.  If $\mu \ne \varpi_4$ we already
 know that $\mu \in \goodwt(G,H)$. Thus by \ref{vdk-lemma} we have $\varpi_4 \in \goodwt(G,H)$.

 Finally, set $F = \ho G { \varpi_1 } \otimes \ho G { \varpi_2
 }$. Then $\chi(F) = \chi( \varpi_2 ) + \chi( \varpi_5 ) + \chi(
 \varpi_7 ) + \chi(\varpi_1 + \varpi_7) + \chi(\varpi_1 + \varpi_2)
 $. Thus, if $\mu$ is a weight for which $\ho{G}{\mu}$ is isomorphic
 to a layer of a good filtration of $F$, then $\mu \leq \varpi_5$
 if $\mu \ne \varpi_1 + \varpi_2$.  Now, $\varpi_1 + \varpi_2$ is not
 in the $W_p$-orbit of $\varpi_5$. Since $\varpi_2,\varpi_7,\varpi_1
 +\varpi_7 \in \goodwt(G,H)$, we conclude by \ref{vdk-lemma} that $\varpi_5 \in \goodwt(G,H)$, and we now have that $\varpi_i \in \goodwt(G,H)$ for all $ 1 \leq i \leq 7 $. Hence the proof is completed for a group of type $E_7$.

Groups of type $F_4$, $E_6$ and $E_8$ are handled in a similar manner; we omit the details.
\end{proof}

\begin{remark} \label{rem:lie and Wp-orbits} We used the following algorithm in LiE to determine, for two weights $\mu$ and $\nu$, the primes $p$ for which these two weights are conjugate under $W_p$.

Let $\Lambda_R$ denote the root lattice of $G$. First, we check if $\mu - \nu$ is in $\Lambda_R$; if not, the two weights are clearly not $W_p$-conjugate. Next, note that $\mu$ is $W_p$-conjugate to $\nu$ if and only if $w \bullet \mu - \nu \in p \Lambda_R$ for some $w \in W$, so we use LiE to compute the finite set
\begin{equation*}
\{  w \bullet \mu - \nu : w \in W  \}
\end{equation*}
and determine for which primes $p$ the elements of this set are in $p \Lambda_R$.
\end{remark}

\section{Main results} \label{sec:main results}

\subsection{Optimal $\SL_2$-subgroups of classical groups}
\label{sub:optimal sl2 subgroups of classical groups}

Let $G$ be a classical group with natural representation $V$ as
in \S\ref{classical-groups}. Recall that $p>2$ if $G \neq \SL(V)$. Also recall the definition of optimal $SL_2$ subgroups from \S\ref{optimal-SL2-subgroups}.

\begin{stmt}
\label{optimal-for-classical}
Let $\phi:\SL_2 \to G$ be a homomorphism of algebraic groups. Then
$\phi$ is optimal if and only if $V$ affords a restricted semisimple
module for $\SL_2$.
\end{stmt}

\begin{proof}
 This is observed in \cite{mcninch-testerman-cr}, Remark 18 when $G
 = \SL(V)$, but the argument given there is valid for any $\Omega(V)$
 as well.
\end{proof}

\begin{theorem}
\label{classical-optimal-donkin-pair}
Let $S$ be an optimal $\SL_2$-subgroup of the classical group $G$. Then $(G,S)$ is a
Donkin pair.
\end{theorem}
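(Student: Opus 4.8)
The plan is to reduce to checking a single, uniform structural fact about the natural representation $V$ of $G$ restricted to $S$, and then feed that into the machinery already developed in \S\ref{sec:the groups of interest}. Specifically, by \ref{check-Donkin-pair-on-exterior-powers} it suffices to show that the exterior algebra $\exterior^\bullet V$ has a good filtration as an $S$-module (using $p>2$ when $G \ne \SL(V)$, which is part of our standing hypothesis). So the entire theorem comes down to understanding $\res^G_S V$ and its exterior powers.

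First I would invoke \ref{optimal-for-classical}: since $S$ is an optimal $\SL_2$-subgroup of $G$, the natural module $V$ is a \emph{restricted semisimple} $S$-module. That is exactly the hypothesis of \ref{exterior-of-restricted-semisimple-for-SL2}, which tells us directly that $\exterior^\bullet V$ has a good filtration as an $S$-module. Combining this with \ref{check-Donkin-pair-on-exterior-powers} immediately yields that $(G,S)$ is a Donkin pair. So the proof is essentially a two-line chain: optimality $\Rightarrow$ $V$ is restricted semisimple (by \ref{optimal-for-classical}) $\Rightarrow$ $\exterior^\bullet V$ has a good $S$-filtration (by \ref{exterior-of-restricted-semisimple-for-SL2}) $\Rightarrow$ $(G,S)$ is a Donkin pair (by \ref{check-Donkin-pair-on-exterior-powers}).

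The one point that needs a word of care is the hypothesis $p>2$ in \ref{check-Donkin-pair-on-exterior-powers} for the non-$\SL(V)$ cases: this is automatic here because we defined classical groups $\Omega(V,\beta)$ only for $p>2$, and this is restated at the start of \S\ref{sub:optimal sl2 subgroups of classical groups}. One should also note that the embedding $S \hookrightarrow G$ is via a homomorphism $\phi:\SL_2 \to G$, and $S = \operatorname{image}(\phi)$; since the condition in \ref{check-Donkin-pair-on-exterior-powers} and the conclusion of \ref{exterior-of-restricted-semisimple-for-SL2} only concern $\res^G_S(-)$ as a module over the image subgroup, and since restricting along $\phi$ and then viewing as an $S$-module agree, there is no subtlety here — any $G$-module restricts to $S$, and whether it has a good filtration as a module for $\SL_2$ or for its image $S \simeq \SL_2$ or $\operatorname{PSL}_2$ is the same question after pulling back.

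I do not anticipate a serious obstacle: all the hard work has been front-loaded into \ref{exterior-of-restricted-semisimple-for-SL2} (which handles the delicate top exterior power $\exterior^p(\ho{S}{p-1}) = k$ when $n = p-1$) and into \ref{check-Donkin-pair-on-exterior-powers} (which unwinds the fundamental-weight descriptions of \ref{SLV-description}, \ref{SpV-description}, \ref{odd-orthog-description}, \ref{even-orthog-description}, including the splitting $\exterior^r V \simeq \ho{\SOrth(V)}{2\varpi_r} \oplus \ho{\SOrth(V)}{2\varpi_{r-1}}$ in type $D$). If there is any residual worry, it is purely bookkeeping: confirming that the characterization of optimality in \ref{optimal-for-classical} applies verbatim to $\Omega(V)$ and not just $\SL(V)$ — but this is asserted there, citing \cite{mcninch-testerman-cr}. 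So the proof proposal is essentially to chain these three cited results together.
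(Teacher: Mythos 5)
Your proposal is correct and follows exactly the same chain as the paper's own proof: \ref{optimal-for-classical} gives that $V$ is a restricted semisimple $S$-module, \ref{exterior-of-restricted-semisimple-for-SL2} gives the good $S$-filtration on $\exterior^\bullet V$, and \ref{check-Donkin-pair-on-exterior-powers} concludes. No gaps; the remarks on $p>2$ and on $S$ versus $\SL_2$ are sensible but not needed beyond what the cited results already cover.
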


\begin{proof}
 Let $S \subset G$ be an optimal $\SL_2$-subgroup, i.e.  the image of
 an optimal homomorphism $\SL_2 \to G$.  According to
 \ref{optimal-for-classical}, the natural representation $V$ of $G$
 is a restricted semisimple $S$-module. It now follows from
 \ref{exterior-of-restricted-semisimple-for-SL2} that
 $\exterior^\bullet V$ has a good filtration as $S$-module.  Finally,
 Theorem \ref{check-Donkin-pair-on-exterior-powers} now shows that
 $(G,S)$ is a Donkin pair, as required.
\end{proof}

We now wish to investigate optimal $\SL_2$-subgroups of the simply
connected covering group $\Gsc$ of the classical group $G$.  Recall by \ref{SLV-description} and \ref{SpV-description} that the classical group $G$ is simply connected unless $G =
\Omega(V,\beta)$ with $\beta$ a non-degenerate symmetric bilinear form.

\begin{stmt}
 \label{distinguished-sl2-in-symmetric-group}
 Let $\beta$ be a symmetric non-degenerate bilinear form on the
 finite dimensional vector space $V$, and let $S$ be an optimal
 $\SL_2$-subgroup of $\Omega = \Omega(V,\beta)$. Then $S$ is
 distinguished if and only if $\res^\Omega_S V$ is isomorphic as an
 $S$-module to a direct sum $\bigoplus_i V_i$ of simple $S$-submodules $V_i \simeq \ho{S}{n_i}$, where $n_i < p$ are even and
 pairwise distinct. If $S$ is distinguished, the restriction of
 $\beta$ to each $V_i$ is non-degenerate and $S \subset \prod_i
 \Omega(V_i) \subset \Omega$.
\end{stmt}

\begin{proof}
 First suppose that $\res^\Omega_S V \simeq \bigoplus_i \ho{S}{n_i}$
 with the $n_i$ as indicated. By \ref{SL2-simple-hos} the $V_i= \ho{S}{n_i}$ are simple
 self-dual $S$-submodules of $V$ which are pairwise non-isomorphic, so the
 restriction of $\beta$ to $V_i$ is non-degenerate for each $i$.
 Thus $S$ is contained in the subgroup $\prod_i
 \Omega(V_i)$.  Now, Schur's Lemma shows the endomorphism algebra
 $\End_S(V)$ to be isomorphic to $\prod_i k$ with the $i$-th factor
 acting by scalar multiplication on $V_i$. Thus the centralizer
 $T=C_{\GL(V)}(S)$ is the group of units $\End_S(V)^\times \simeq
 \prod_i \Gm$.  In particular, the $\GL(V)$-centralizer $T$ is
 contained in $\prod_i \GL(V_i)$. But the intersection of $\prod_i
 \GL(V_i)$ with $\Omega$ is precisely $\prod_i \Omega(V_i)$ and the
 intersection of $T$ with $\prod_i \Omega(V_i)$ is finite. Thus
 $C_\Omega(S)$ contains no positive dimensional torus and
 \ref{distinguished-optimal-sl2s} implies that $S$ is distinguished.

 For the converse, let $X \in \Lie(S)$ be nilpotent; since $S$ is
 assumed distinguished, the nilpotent element $X$ is
 distinguished. According to \cite{jantzen-nil}, Lemma 4.2 the
 partition describing the action of $X$ on $V$ has distinct odd
 parts. By \ref{optimal-for-classical}, $V$ is a restricted semisimple $S$-module and the required description of the $S$-module $\res^\Omega_S V$ now follows from \ref{restricted-ss-for-sl2}.
\end{proof}

\begin{theorem}
 \label{spin-group-optimal-donkin-pair}
 Let $\beta$ be a symmetric non-degenerate bilinear form on the
 finite dimensional vector space $V$, and write $\Gsc \to
 \Omega(V,\beta)$ for the simply connected covering group of
 $\Omega(V,\beta)$. Write $d$ for the rank of $\Omega(V,\beta)$; thus
 $\dim V = 2d$ or $2d+1$.  Let $S \subset \Gsc$ be an optimal
 $\SL_2$-subgroup. Then $(\Gsc,S)$ is a Donkin pair under the
 following assumptions:
 \begin{enumerate}[(a)]
 \item If $\dim V = 2d+1$ is odd, $p > \dbinom{d+1}{2} = \dfrac{(d+1)d}{2}$.
 \item If $\dim V = 2d$ is even, $p > \dbinom{d}{2} = \dfrac{d(d-1)}{2}$.
 \end{enumerate}
\end{theorem}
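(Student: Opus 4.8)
The plan is to apply \ref{donkin-subgroups-of-spin} with $H = S$, which reduces the theorem to two assertions. First, $(\Omega(V,\beta),\pi S)$ must be a Donkin pair: since $\pi$ is a central isogeny, \ref{optimal-central-isogeny} shows that $\pi S$ is an optimal $\SL_2$-subgroup of $\Omega(V,\beta)$, so this follows at once from Theorem \ref{classical-optimal-donkin-pair}. Second, $\res^{\Gsc}_S \ho{\Gsc}{\varpi_d}$ must have a good filtration when $\dim V = 2d+1$, and $\res^{\Gsc}_S \ho{\Gsc}{\varpi_{d-1}}$ and $\res^{\Gsc}_S \ho{\Gsc}{\varpi_d}$ must both have good filtrations when $\dim V = 2d$ (numbering the fundamental weights as in \ref{odd-orthog-description} and \ref{even-orthog-description}). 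I will establish the second assertion by showing that each of these restricted spin and half-spin modules is a \emph{restricted semisimple} $S$-module; by \ref{restricted-ss-for-sl2} such a module is a direct sum of induced modules $\ho S m$ with $m < p$, which trivially has a good filtration. Moreover, again by \ref{restricted-ss-for-sl2}, to see that such an $S$-module is restricted semisimple it is enough to bound the absolute values of the weights that occur, so the whole problem reduces to a weight estimate.

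After conjugating $S$ so that a maximal torus $T_S$ of $S$ lies in the chosen maximal torus $T$ of $\Gsc$, let $\lambda \in X_*(T)$ be the resulting cocharacter and write the root datum of type $B_d$ (resp.\ $D_d$) in terms of the usual characters $\epsilon_1,\dots,\epsilon_d$ as in \cite{bourbaki}. The $S$-weights of the natural module $V$ are then $\langle \pm\epsilon_j,\lambda\rangle$ for $1 \le j \le d$ (together with a single weight $0$ when $\dim V$ is odd), and the $S$-weights of the spin module $\ho{\Gsc}{\varpi_d}$ (resp.\ of the half-spin modules $\ho{\Gsc}{\varpi_{d-1}}$ and $\ho{\Gsc}{\varpi_d}$) are the values $\tfrac12\sum_{j=1}^d\langle \pm\epsilon_j,\lambda\rangle$ over the appropriate sign patterns. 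Writing $a_j = \langle \epsilon_j,\lambda\rangle$, every $S$-weight of a (half-)spin module has absolute value at most $\tfrac12\sum_{j=1}^d|a_j|$, so it suffices to prove
\begin{equation*}
\tfrac12\sum_{j=1}^d|a_j| \le \binom{d+1}{2} \ \ (\dim V = 2d+1),\qquad \tfrac12\sum_{j=1}^d|a_j| \le \binom{d}{2}\ \ (\dim V = 2d).
\end{equation*}
Since $S$ is optimal, \ref{optimal-central-isogeny} shows $\pi S \subset \Omega(V,\beta)$ is optimal, so by \ref{optimal-for-classical} and \ref{restricted-ss-for-sl2} we may write $\res^{\Gsc}_S V \simeq \bigoplus_i \ho S{n_i}$ with $0 \le n_i < p$ and $\sum_i(n_i+1) = \dim V$. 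Comparing weights, $\tfrac12\sum_j|a_j| = \tfrac14\sum_i g(n_i)$ where $g(n) = \sum_{k=0}^n|n-2k|$; a direct computation gives $2g(n) = (n+1)^2$ for $n$ odd and $2g(n) = (n+1)^2-1$ for $n$ even, so
\begin{equation*}
\tfrac12\sum_{j=1}^d|a_j| \;=\; \tfrac18\Big(\sum_i(n_i+1)^2 - \#\{\,i : n_i \text{ even}\,\}\Big).
\end{equation*}

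When $\dim V = 2d+1$, a short extremal-combinatorics argument (concentrating the mass, using $\sum_i(n_i+1)^2 \le (\sum_i(n_i+1))^2$) shows the right-hand side is maximized by the single constituent $n_1 = 2d$, i.e.\ $\res^{\Gsc}_S V \simeq \ho S{2d}$, with value $\tfrac18\big((2d+1)^2-1\big) = \binom{d+1}{2}$. When $\dim V = 2d$ one must bring in the symmetric form: the simple module $\ho S m$ with $m$ odd carries only an alternating invariant form, so $\beta$ (which restricts non-degenerately to each $\SL_2$-isotypic summand of $\res^{\Gsc}_S V$) forces every odd constituent $\ho S m$ to occur with even multiplicity; under this constraint a single-string restriction $\ho S{2d-1}$ is impossible, and the right-hand side is maximized by $\res^{\Gsc}_S V \simeq \ho S{2d-2}\oplus\ho S 0$, with value $\tfrac18\big((2d-1)^2+1-2\big) = \binom d2$. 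In both cases the extremal configuration is exactly the restriction of $V$ to a principal $\SL_2$, so the hypotheses $p > \binom{d+1}{2}$ (resp.\ $p > \binom d2$) give precisely the displayed bounds with room to spare. Hence every $S$-weight of every (half-)spin module has absolute value $< p$; by \ref{restricted-ss-for-sl2} these restrictions are restricted semisimple $S$-modules, so have good filtrations, and with the first assertion \ref{donkin-subgroups-of-spin} gives that $(\Gsc,S)$ is a Donkin pair.

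The only step that needs a genuine idea rather than bookkeeping is the even-dimensional weight estimate: the bound coming from $\sum_i(n_i+1) = 2d$ alone only yields $\tfrac12\sum_j|a_j|\le d^2/2$, which exceeds $\binom d2$, and one must invoke the symmetry of $\beta$ to force odd-highest-weight constituents of $\res^{\Gsc}_S V$ to appear in pairs and thereby sharpen the bound. Verifying that $\ho S{2d-2}\oplus\ho S 0$ really is the extremal configuration subject to that parity constraint — for instance via the observation that replacing a pair of equal even parts by the two odd parts of the same total does not decrease $\sum_i(n_i+1)^2 - \#\{i:n_i\text{ even}\}$, reducing to the case with no even constituents — is where the care goes.
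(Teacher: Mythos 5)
Your proof is correct, but it follows a genuinely different route from the paper's. The paper never touches the spin module of $\Gsc$ itself: it takes a maximal torus $T$ of $C_{\Gsc}(S)$, passes to the Levi factor $M = C_{\Gsc}(T)$ (a Donkin subgroup by \ref{Levi-Donkin-pair}), and observes that $S$ is then a \emph{distinguished} optimal $\SL_2$ in the orthogonal factor of $M$; by \ref{distinguished-sl2-in-symmetric-group} (which rests on \cite{jantzen-nil}, Lemma 4.2) the orthogonal factor splits as $\prod_i \Omega(V_i)$ with $S$ acting irreducibly on each odd-dimensional $V_i$, and one only has to bound the highest $S$-weight $\binom{d'+1}{2}$ of the spin module of each $\Omega(V_i)_{\sconn}$ with $\dim V_i = 2d'+1$. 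You instead apply \ref{donkin-subgroups-of-spin} directly to $\Gsc$ and bound \emph{all} $S$-weights of the (half-)spin modules of $\Gsc$ by $\tfrac12\sum_j|a_j|$, turning the problem into an extremal estimate over all restricted-semisimple decompositions $V \simeq \bigoplus_i \ho{S}{n_i}$ compatible with $\beta$. The information the paper extracts from the classification of nilpotent orbits in $\lie{so}(V)$ you recover by hand via the observation that $\ho{S}{m}$ for $m$ odd carries only an alternating invariant form, forcing odd constituents to occur in pairs; this is exactly what is needed to beat the naive bound $d^2/2$ in the even case, and your exchange argument (replacing a pair of equal even parts $n_i+1$ by two odd parts of the same total, which leaves $\sum_i(n_i+1)^2 - \#\{i: n_i \text{ even}\}$ unchanged) correctly identifies the extremal configurations $\ho{S}{2d}$ and $\ho{S}{2d-2}\oplus\ho{S}{0}$, yielding the same numerical thresholds as the paper. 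The trade-off: the paper's reduction leans on more structure theory (associated cocharacters, Levi centralizers, the partition classification of orthogonal nilpotents) but keeps the weight computation trivial; yours is more self-contained and works with the full spin module at the cost of the combinatorial optimization. One small wording slip: after your exchange step the configuration has no \emph{odd} $n_i$ (all parts $n_i+1$ odd), not ``no even constituents,'' but the computation that follows is the right one.
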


\begin{proof}
 Let $T$ be a maximal torus of the centralizer $C_{\Gsc}(S)$; then $S$ is contained in the subgroup $M = C_{\Gsc}(T)$, and $M$ is a Levi factor of a parabolic subgroup of $\Gsc$, hence $(\Gsc,M)$ is a Donkin pair by \ref{Levi-Donkin-pair}. It now follows from \ref{levi-of-classical} that $M \simeq M_1 \times M_2$ where $M_1$ is the simply connected covering group of a Levi factor of a parabolic subgroup of some $\SL(U)$ and $M_2$ is the simply connected covering group of $\Omega(W)$ for a non-degenerate subspace $W \subset V$.
 
 To see that $S$ is a good filtration subgroup of $M$ one may use
 \ref{direct-factor-Donkin-subgroup}; thus, it is enough to see that
 $(M_1,S)$ and $(M_2,S)$ are Donkin pairs (where by abuse of notation
 we write $S$ for its image in $M_i$). Evidentally (the image of) $S$
 is an optimal $\SL_2$-subgroup of $M_i$ for $i=1,2$. Thus Theorem
 \ref{classical-optimal-donkin-pair} already shows that $(M_1,S)$ is
 a Donkin pair.

 It remains to argue that $(M_2,S)$ is a Donkin pair. Since $T$ is a
 maximal torus of $C_{\Gsc}(S)$, evidentally $S$ is a
 \emph{distinguished} optimal $\SL_2$-subgroup of $M_2$. In
 particular, \ref{distinguished-sl2-in-symmetric-group} shows that as an
 $S$-module, $W$ is isomorphic to a direct sum $\bigoplus_i V_i$ of simple
 $S$-submodules $V_i \simeq \ho{S}{n_i}$ where $n_i < p$ are even and
 pairwise distinct. Moreover, $S$ acts on $W$ through its image in
 $\prod_i \Omega(V_i)$, hence $S$ is contained in the product
 $\prod_i \Omega(V_i)_{\sconn} \subset M_2$. According to
 \ref{direct-factor-Donkin-subgroup} to see that $(M_2,S)$ is a
 Donkin pair, it suffices to see that the image of $S$ in
 $\Omega(V_i)_{\sconn}$ is a good filtration subgroup for each $i$.
 Thus we may and will suppose that $M_2 = \Omega(W)_{\sconn}$ is the
 simply connected cover of $\Omega(W)$ with $\dim W = 2d'+1$ odd for some $d' \leq d$, and that $S$ acts irreducibly on $W$.

 Using Theorem \ref{classical-optimal-donkin-pair} together with
 \ref{donkin-subgroups-of-spin}, we see that $S$ will be a good
 filtration subgroup of $M_2$ provided that the spin module
 $L=\ho{\Omega(W)_{\sconn}}{\varpi_{d'}}$ has a good filtration as
 $S$-module. Since $\varpi_{d'}$ is a minscule weight for the root
 system $B_{d'}$, one knows that the weights of the spin representation
 are precisely the Weyl group conjugates of $\varpi_{d'}$. Using this
 description, one sees that when viewed as a module for $S$, the
 highest weight of the spin module $L$ is 
 \begin{equation*}
   \dfrac{1}{2}\left ( \sum_{i=1}^{d'} 2i \right) = \dbinom{d'+1}{2}.
 \end{equation*}
 Thus if $p > \dbinom{d'+1}{2}$, it follows from
 \ref{restricted-ss-for-sl2} that $\res^{\Omega(W)_{\sconn}}_S(L)$ is
   a restricted semisimple $S$-module, and hence has a good
   $S$-filtration.

Now, using the fact that $d' \leq d$ and that $d' < d$ if $\textrm{dim } V$ is even (since in this case $2d' + 1 = \textrm{dim } W \leq \textrm{dim } V = 2d$), the conditions on $p$ and $d$ we have given in the statement suffice to guarantee that $(\Gsc,S)$ is a Donkin pair.
\end{proof}

\begin{remark} \label{rem:table of primes for exceptional group levis}
 Let $\beta$ be a non-degenerate symmetric bilinear form on $V$, and
 let $\Omega_{\sconn}$ be the simply connected covering group of
 $\Omega(V,\beta)$. The preceding theorem shows that any optimal
 $\SL_2$-subgroup $S \subset \Omega_{\sconn}$ is a good filtration subgroup 
 under the conditions indicated in the following tables:

 \begin{tabular*}{.45\linewidth}[t]{@{\extracolsep{\fill}} |c|c|c|}
   \hline
   root system  of $\Omega(V)$ &  $\dim V$  & condition  \\
   \hline
   $D_3$ & 6  & $p \ge 5$\\
   $D_4$ & 8  & $p \ge 7$\\
   $D_5$ & 10 & $p \ge 11$ \\
   $D_6$ & 12 & $p \ge 17$ \\
   $D_7$ & 14 & $p \ge 23$ \\
   \hline
 \end{tabular*}
 \quad
 \begin{tabular*}
{.45\linewidth}[t]{@{\extracolsep{\fill}} |c|c|c|}
   \hline
   root system of $\Omega(V)$ &  $\dim V$ & condition  \\
   \hline
   $B_2$ & 5 & $p \ge 7$\\
   $B_3$ & 7 & $p \ge 11$ \\
   \hline
 \end{tabular*}
\end{remark}



\subsection{Optimal $\SL_2$-subgroups of exceptional type groups}

\label{sub:optimal sl2s in the exceptional case}
We now turn our attention to the exceptional groups.

\begin{theorem} 
\label{th:nilpotent orbit Donkin pairs} 
Let $G$ be a quasisimple group of exceptional type and let $S \subset G$ be an
optimal $\SL_2$-subgroup. Assume that $p$ is a good prime for $G$. If $S$ is distinguished then $(G,S)$ is a Donkin pair. More generally, if $S$ is an arbitrary optimal $SL_2$ subgroup then $(G,S)$ is a Donkin pair under the following additional conditions on $p$:
\begin{itemize}
\item If $G$ is of type $E_6$ or $F_4$ then $p \geq 11$;
\item If $G$ is of type $E_7$ then $p \geq 17$;
\item If $G$ is of type $E_8$ then $p \geq 23$.
\end{itemize}
\end{theorem}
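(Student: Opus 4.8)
The plan is to reduce the general statement to the case of a \emph{distinguished} optimal $\SL_2$-subgroup, and to prove that case by combining Theorem \ref{th:exceptional group Donkin pairs} with an analysis of the restrictions to $S$ of the small fundamental representations.

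For the reduction, I would take an arbitrary optimal $\SL_2$-subgroup $S\subseteq G$, fix a nonzero nilpotent $X\in\Lie(S)$ with associated cocharacter $\lambda$, let $T$ be a maximal torus of $C_G(S)$, and set $M=C_G(T)$. Then $T=Z(M)^0$ (it is central in $M=C_G(T)$ and maximal in $C_G(S)$), and since the image of $\lambda$ lies in the derived group of $M$, the cocharacter $\lambda$ is associated with $X$ also inside $M$ (by \ref{assoc-cochar}), so $S$ is an optimal $\SL_2$-subgroup of $M$ and $X$ is distinguished in $M$. As $M$ is a Levi factor of a parabolic subgroup of $G$, $(G,M)$ is a Donkin pair by \ref{Levi-Donkin-pair}, and by transitivity of the Donkin-pair property it suffices to show $(M,S)$ is a Donkin pair. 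Using \ref{direct-factor-Donkin-subgroup} and \ref{reduce-donkin-pair-to-simply-connected-case} this reduces to showing that the image $S_i$ of $S$ in each quasisimple factor $M_i$ of $M$ is a good-filtration subgroup; here $S_i$ is again an optimal $\SL_2$-subgroup and is \emph{distinguished} in $M_i$ by \ref{distinguished-optimal-sl2s} (since $X=\sum_i X_i$ distinguished in $M$ forces each $X_i$ distinguished in $M_i$). When $M_i$ is of classical type I would invoke Theorem \ref{classical-optimal-donkin-pair}, together with Theorem \ref{spin-group-optimal-donkin-pair} for its simply connected orthogonal factors --- and it is exactly the binomial bounds of the latter, applied to the largest orthogonal-type Levi subgroup of $G$ (of type $D_5$, $D_6$, $D_7$ inside $E_6$, $E_7$, $E_8$, and of type $B_3$ inside $F_4$), that produce the conditions $p\geq 11$, $17$, $23$ in the statement. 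When $M_i$ is of exceptional type it is covered by the distinguished case of the present theorem, which I would therefore establish independently.

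For the distinguished case, let $S\subseteq G$ be a distinguished optimal $\SL_2$-subgroup of an exceptional quasisimple group with $p$ good. Since $S$ exists, $X^{[p]}=0$ by Theorem \ref{th:mcninch-optimal}, and for a distinguished $X$ and good $p$ this leaves only a short, explicit list of nilpotent orbits. For each such orbit I would take the associated dominant cocharacter $\lambda$ (whose weighted Dynkin diagram has entries in $\{0,2\}$) and determine the $\SL_2$-module $\res^G_S\ho{G}{\varpi_i}$ from the known weight multiplicities of $\ho{G}{\varpi_i}$ by reading off the integers $\langle\nu,\lambda\rangle$; the aim is to check that this module is a direct sum of induced modules $\ho{S}{n}$, so that $\varpi_i\in\goodwt(G,S)$. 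When the largest $S$-weight that occurs is $<p$ this is automatic by \ref{restricted-ss-for-sl2}; otherwise one computes $\chi(\res^G_S\ho{G}{\varpi_i})$, reads off the candidate layers via \ref{good-filtration-layers-via-character}, and rules out a Weyl-module summand $\weyl{S}{n}$ using \ref{vdk-lemma} (or directly from the $\SL_2$ linkage principle). Having verified this for the fundamental weights appearing in Theorem \ref{th:exceptional group Donkin pairs} ($\varpi_4$ for $F_4$; $\varpi_1,\varpi_6$ for $E_6$; $\varpi_1,\varpi_7$ for $E_7$; $\varpi_1,\varpi_8$ for $E_8$) one concludes via that theorem; for the finitely many good $p$ lying outside its range (notably $p\in\{5,7\}$ for $E_7$), one instead checks all fundamental weights and appeals to \ref{Donkin-pair-by-set-of-weights} and \ref{simply-connected-weight-lattice-donkin-pair-criteria}. (For $G_2$ no extra hypothesis on $p$ is needed and this is the easy base of the whole argument.)

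I expect the main obstacle to be exactly this orbit-by-orbit computation in the distinguished case: unlike the classical situation, the restriction $\res^G_S\ho{G}{\varpi_i}$ need not be restricted semisimple when $p$ is only good, so one cannot merely invoke restricted semisimplicity and must verify --- orbit by orbit, with computer assistance from \cite{lie} for $E_7$ and $E_8$ --- that it is nonetheless a direct sum of $\nabla$'s and contains no Weyl-module summand. A secondary difficulty is the bookkeeping in the reduction: confirming that the passage from $G$ to $C_G(T)$ and then to the quasisimple factors preserves optimality and makes $S$ distinguished in each factor, and that among all Levi subgroups it is the orthogonal-type ones whose spin-representation constraint is binding, so that $p\geq 11,17,23$ really do suffice.
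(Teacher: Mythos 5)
Your proposal follows essentially the same route as the paper: reduce a general optimal $\SL_2$-subgroup to a distinguished one inside the Levi factor $M=C_G(T)$ for $T$ a maximal torus of $C_G(S)$, dispose of the classical quasisimple factors of $M$ via Theorems \ref{classical-optimal-donkin-pair} and \ref{spin-group-optimal-donkin-pair} (whose spin-module bounds for the $D_5,D_6,D_7$ and $B_3$ Levi subgroups are precisely the source of the conditions $p\geq 11,17,23$), and settle the distinguished case by an orbit-by-orbit LiE computation of the $S$-characters of the modules in Theorem \ref{th:exceptional group Donkin pairs}, together with the $X^{[p]}=0$ existence check. Your only deviations are refinements rather than changes of method --- allowing a good-filtration check beyond restricted semisimplicity via \ref{vdk-lemma}, and explicitly flagging that for $E_7$ with $p\in\{5,7\}$ one must fall back on \ref{Donkin-pair-by-set-of-weights} rather than Theorem \ref{th:exceptional group Donkin pairs}, a point the paper's write-up passes over.
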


\begin{proof}
Assume that $p$ is a good prime for $G$. We first prove, using a series of case-by-case computation of branching rules using the computer program LiE \cite{lie}, that $(G,S)$ is a Donkin pair for every \emph{distinguished} optimal $SL_2$ subgroup $S$ of a simply-connected quasisimple group $G$ of exceptional type (see below for an example). The result for non-simply-connected $G$ then follows from \ref{reduce-donkin-pair-to-simply-connected-case}.

Now assume that $S$ is a non-distinguished optimal $SL_2$ subgroup of $G$ and assume the additional conditions given for $p$. We start by following the technique of the proof of Theorem \ref{spin-group-optimal-donkin-pair} to reduce to the distinguished case. Let $T$ be a maximal torus of $C_G(S)$. Then $M = C_G(T)$ is a Levi factor of a parabolic subgroup of $G$ and $S$ is a distinguished optimal $SL_2$ subgroup of $M$. According to \ref{Levi-Donkin-pair}, $(G,M)$ is a Donkin pair; thus to prove the Theorem it suffices to prove that $(M,S)$ is a Donkin pair.

Now, since $S$ is assumed to not be distinguished in $G$, the derived group of $M$ has semisimple rank less than that of $G$. Hence we can proceed by induction on the rank of $M$. The case where rank $M = 1$ is trivial. By \ref{direct-factor-Donkin-subgroup} we may assume that $M$ is quasisimple. As we already know that $(M,S)$ is a Donkin pair when $M$ is of exceptional type (since $S$ is distinguished in $M$) we may assume that $M$ is not of exceptional type. Since $M$ is the Levi factor of a parabolic subgroup of an exceptional group, \ref{Levi factors of exceptional-type groups} tells us the possibilities for the type of $M$. The induction step, and hence the result, now follows from Theorem \ref{classical-optimal-donkin-pair}, Theorem \ref{spin-group-optimal-donkin-pair}, and Remark \ref{rem:table of primes for exceptional group levis}.

We conclude the proof with an illustrative example of the use of LiE for the case where $G$ is of type $E_7$ and $S$ is distinguished. For any $S$-module
 $M$, let $ \chi_S(M)$ denote the character of $M$ as an $S$-module;
 we also write $\chi_S(M) = \chi_S(\res^G_S M)$ for a $G$-module
 $M$. For any integer $n$ set $\chi_S(n):= \chi_S( \ho S { n \varpi
 }) $, where $\varpi$ denotes the single fundamental weight of $S$.

Assume that $G$ is simply-connected of type $E_7$. We use the tables starting on p. 175 of \cite{Carter} (and the labels therein) to check each distinguished $SL_2$ of $G$. For each distinguished type $S$ we verify that the required modules for $G$ given in Theorem  \ref{th:exceptional group Donkin pairs} afford restricted semisimple modules for $S$ and are hence good filtration modules for $S$. To do this, we check that the $S$-characters of these modules are sums of characters of simple induced modules $\ho S n$ for $n < p$ (cf \ref{SL2-simple-hos} and \ref{restricted-ss-for-sl2}).

  We also need to determine the primes $p$ for which there is an
  optimal $SL_2$ morphism $\phi:\SL_2 \to G$ corresponding to the
  given distinguished class. Let $X \in \Lie(B)$ be a nilpotent
  element in the distinguished orbit; then, by Theorem
  \ref{th:mcninch-optimal}, it suffices to verify that $ X^{[p]} = 0
  $. Let $ \mathfrak g = \oplus \, \mathfrak g(i) $ be the
  decomposition of $ \mathfrak g := \Lie(G) $ coming from the
  cocharacter associated to $X$; then by Proposition 24 in
  \cite{mcninch-optimal}, $ X^{[p]} = 0 $ if and only if $ \mathfrak
  g(i) = 0 $ for all $i \geq 2p$. We use this criterion to determine
  for which primes $X^{[p]} = 0$.
  \begin{itemize}
  \item When $\Lie(S)$ contains a regular nilpotent element of $G$,
    we have $\chis{ \ho G {\varpi_1} } = 35 \chis 0 + 31 \chis 2 +
    \chis 4 $ and $\chis{ \ho G {\varpi_7} } = \chis 0 + 12 \chis 2
    $. Furthermore, there is an optimal $SL_2$ homomorphism
    corresponding to the regular nilpotent orbit when $p \geq 17$.
  \item When $S$ is of type $E_7(a_1)$ or $E_7(a_5)$ we have $\chis{
      \ho G {\varpi_1} } = 17 \chis 0 + 22\chis 2 + 10 \chis 4 $ and
    $ \chis{ \ho G {\varpi_7} } = 4 \chis 0 + 14 \chis 2 + 2 \chis 4
    $. For type $E_7(a_1)$ there is an optimal $SL_2$ homomorphism
    corresponding to this distinguished type when $p \geq 13$ and for
    type $ E_7(a_5) $ we need $p \geq 5$.
  \item When $S$ is of type $E_7(a_2)$, $E_7(a_3)$, or $E_7(a_4)$ we
    have $\chis{ \ho G {\varpi_1} } = 9 \chis 0 + 15 \chis 2 +13
    \chis 4 + 2 \chis 6 $ and $\chis{ \ho G {\varpi_7} } = 8 \chis 0
    + 6 \chis 2 + 6 \chis 4 $. For types $ E_7(a_2) $ and $ E_7(a_3)
    $ there is an optimal $SL_2$ homomorphism corresponding to this
    distinguished type when $p \geq 11$; for type $ E_7(a_4) $ we
    need $p \geq 7$.
\end{itemize}
\end{proof}




\subsection{Subsystem subgroups in the exceptional cases} 
\label{sub:subsystem subgroups in the exceptional case}

Let $G$ be a semisimple group. Recall that a \emph{subsystem subgroup} of a semisimple group $G$ is a connected semisimple subgroup which is normalized by a maximal torus of $G$. In this section we consider pairs $(G,H)$ where $H$ is a subsystem subgroup of $G$. Remark that Levi factors of parabolic subgroups of $G$ are subsystem subgroups; these Levi factors are good filtration subgroups by \ref{Levi-Donkin-pair}. 

\begin{remark} [cf \cite{lieb}] \label{rem:subsystem subgroups} Subsystem subgroups of $G$ can be characterized as follows. Recall that $R$ is the root system of $G$. For any sub-root system $R' \subseteq R$ let $H$ be the subgroup of $G$ generated by $T$ and the root subgroups $\{ U_\beta : \beta \in R' \}$. Then $H$ is a subsystem subgroup of $G$ and every subsystem subgroup of $G$ can be obtained in this way.

There is also a nice inductive procedure for constructing all subsystem subgroups of $G$. Starting with the extended Dynkin diagram of $G$, remove any collection of nodes. This gives the Dynkin diagram $D'$ of a subsystem subgroup, and there is one conjugacy class of subsystem subgroups for each subdiagram obtained in this way. Now repeat this process with the connected components of the subdiagram $D'$. In this fashion one obtains all subsystem subgroups of $G$. In particular, there are only finitely many conjugacy classes of subsystem subgroups.
\end{remark}

A subsystem subgroup of maximal rank (= rank $G$) is the centralizer of an involution if and only if it is obtained via the technique of Remark \ref{rem:subsystem subgroups} by removing a node from the extended Dynkin diagram with label 2 (where we label the extended Dynkin diagram based on the coefficients occurring in the highest root). We first recall the following theorem. 

\begin{theorem}[\cite{brundan}, Proposition 3.3 and \cite{vdK}, \S6]  \label{th:brundan vdk Donkin pairs} Let $H$ be a reductive subgroup of $G$ which is the centralizer of an involution. If not all components of $G$ are of type $A_r$ or $C_r$ assume that $p > 2$. Then $(G,H)$ is a Donkin pair. In particular, if $G$ has no components of exceptional type then $(G,H)$ is a Donkin pair for every subsystem subgroup $H$ of $G$.
\end{theorem}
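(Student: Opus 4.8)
The first assertion is precisely the content of Brundan's conjecture on centralizers of involutions; under the stated hypothesis on $p$ it is proved in \cite{brundan}, Prop.~3.3 (for $G$ of classical type, where $H$ is the stabilizer of an orthogonal or symplectic decomposition of the natural module) and in \cite{vdK}, \S6 (in the remaining cases), and nothing more need be said about it. The plan for the ``in particular'' clause is to deduce it from this first assertion together with \ref{Levi-Donkin-pair}, \ref{direct-factor-Donkin-subgroup}, the transitivity of the Donkin-pair property, and the Borel--de Siebenthal description of subsystem subgroups recalled in Remark~\ref{rem:subsystem subgroups}.

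The first step is to reduce to subsystem subgroups of \emph{maximal rank}. Given a subsystem subgroup $H \subseteq G$ with root system $R' \subseteq R$, let $S'$ be the identity component of $\bigcap_{\alpha \in R'} \ker\alpha \subseteq T$, put $L = C_G(S')$, and write $L' = [L,L]$. Then $L$ is a Levi factor of a parabolic subgroup of $G$ whose root system spans the same rational subspace as $R'$, so $H$ is a subsystem subgroup of the semisimple group $L'$ of maximal rank (and in particular $H \subseteq L'$). Now $(G,L)$ is a Donkin pair by \ref{Levi-Donkin-pair}, while $(L,L')$ is a Donkin pair for a trivial reason: the flag varieties of $L$ and $L'$ coincide and the restriction to $L'$ of the induced $L$-module $\ho L \lambda$ is the induced $L'$-module $\ho{L'}{\lambda|_{T \cap L'}}$, so the restriction to $L'$ of any $L$-module with a good filtration has a good filtration. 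By transitivity it therefore suffices to show that $(L',H)$ is a Donkin pair; and here $L'$ has no component of exceptional type, because by \ref{levi-of-classical} (together with the elementary analogue for $\SL(V)$) the Levi factors of a classical group have only components of classical type.

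The second step is to run the Borel--de Siebenthal procedure of Remark~\ref{rem:subsystem subgroups} one node at a time. A maximal-rank subsystem subgroup $H$ of the semisimple classical group $L'$ lies at the bottom of a finite chain $L' = H_0 \supsetneq H_1 \supsetneq \cdots \supsetneq H_m = H$ in which $H_{i+1}$ arises from $H_i$ by replacing one quasisimple factor $F$ of $H_i$ by a \emph{maximal} proper maximal-rank subsystem subgroup $F'$, i.e. by deleting from the extended Dynkin diagram $\widetilde F$ a single node whose coefficient in the highest root exceeds $1$ (the chain is finite since $|R(H_{i+1})| < |R(H_i)|$). By \ref{direct-factor-Donkin-subgroup} and transitivity it is enough to prove that each such pair $(F,F')$ is a Donkin pair, with $F$ quasisimple of classical type. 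But for $F$ of type $A_r$ no node of $\widetilde F$ has coefficient $> 1$, so this case never occurs; and for $F$ of type $B_r$, $C_r$ or $D_r$ every node of coefficient $> 1$ has coefficient exactly $2$, and deleting it yields $F'$ of type $D_j \times B_{r-j}$, $C_j \times C_{r-j}$, or $D_j \times D_{r-j}$ respectively --- in each case the identity component of the centralizer in $F$ of the involution of the natural module $V = V_1 \perp V_2$ acting as $-1$ on a non-degenerate summand $V_1$ and as $+1$ on $V_2$. Thus the first assertion of the theorem applies to $(F,F')$, and the hypothesis on $p$ is met automatically: $p > 2$ is assumed the moment a factor of type $B$ or $D$ appears, and such a factor can occur only when $G$ itself has a component that is not of type $A_r$ or $C_r$. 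This completes the reduction, and with it the proof.

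I expect the only genuinely delicate point to be the bookkeeping in this last step: one has to be sure that, once the Levi factor has been split off, every remaining passage in the chain is literally the deletion of a single coefficient-$2$ node from the extended diagram of a quasisimple group of type $B$, $C$ or $D$ --- equivalently, a pair of the shape $(\Omega(V_1 \perp V_2),\, \Omega(V_1) \times \Omega(V_2))$ to which the first assertion directly applies --- and that the restriction on $p$ indeed propagates down the chain. No cohomological or geometric input is needed beyond \ref{Levi-Donkin-pair}, \ref{direct-factor-Donkin-subgroup}, and the Brundan--van der Kallen theorem itself.
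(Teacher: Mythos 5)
Your proposal is correct and takes essentially the approach the paper intends: the paper states this theorem without proof, citing Brundan and van der Kallen for the first assertion, and its introduction sketches precisely your reduction of the ``in particular'' clause (transitivity of the Donkin-pair property, Levi factors via \ref{Levi-Donkin-pair} and \ref{direct-factor-Donkin-subgroup}, and the Borel--de Siebenthal observation that for classical types every proper single-node deletion from the extended diagram removes a coefficient-$2$ node and yields an involution centralizer of the shape $\Omega(V_1)\times\Omega(V_2)$). Your write-up simply supplies the bookkeeping the paper leaves to the reader.
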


Recall that if we have reductive subgroups $H \subseteq H' \subseteq G$ such that $(G,H')$ and $(H', H)$ are Donkin pairs, then also $(G,H)$ is a Donkin pair. Thus, to determine for which primes a given subsystem subgroup $H \subset G$ will be a good filtration subgroup, it suffices to consider the case where $H$ has maximal rank (= rank $G$). In the following theorem we consider the remaining subsystem subgroups of maximal rank which are not covered by Theorem \ref{th:brundan vdk Donkin pairs}. This list can be obtained by the method of Remark \ref{rem:subsystem subgroups} above.

\begin{theorem} \label{th:subsystem subgroups in the exceptional case}  Suppose that $p>2$, let G be a quasisimple group of exceptional type and let $H \subset G$ be a subsystem subgroup of maximal rank such that $H$ is not the centralizer of an involution. Then the pair $(G,H)$ appears in the following list, and $H$ is a good filtration subgroup of $G$ provided that $p$ satisfies the indicated condition.

\begin{itemize}
\item $G$ is simply-connected of type $F_4$
\begin{itemize}
	\item{$A_2 \times A_2$}: $p \geq 5$.
	\item{$A_3 \times A_1$}: $p \geq 5$.
\end{itemize}

\item $G$ is simply-connected of type $G_2$
\begin{itemize}
	\item{$A_2 $}: $p \neq 3$.
\end{itemize}

\item $G$ is simply-connected of type $E_6$
\begin{itemize}
	\item{$A_2 \times A_2 \times A_2 $}: $p \geq 5$.
\end{itemize}

\item $G$ is simply-connected of type $E_7$
	\begin{itemize}
	\item{$A_5 \times A_2$}: $p \notin \{  2, 5, 7  \}$.
	\end{itemize}
	
\item $G$ is simply-connected of type $E_8$
	\begin{itemize}
	\item{$A_8 $}: $p \geq 7$.
	\item{$A_1 \times A_2 \times A_5$}: $p \geq 7$.
	\item{$A_4 \times A_4 $}: $p \geq 7$.
	\item{$D_5 \times A_3$}: $p \geq 7$. 
	\item{$E_6 \times A_2$}: $p \geq 7$.
	\item{$A_1 \times A_7$}: $p \geq 11$.
	\end{itemize}
\end{itemize}

\end{theorem}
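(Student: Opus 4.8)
The plan is to go through the list pair by pair and, for $G$ of type $F_4$, $E_6$, $E_7$, or $E_8$, to reduce ``$(G,H)$ is a Donkin pair'' to a finite check on branching rules by means of Theorem~\ref{th:exceptional group Donkin pairs}, and then to carry out that check with LiE~\cite{lie}. The classification half of the statement is a finite inspection: apply the extended-Dynkin-diagram procedure of Remark~\ref{rem:subsystem subgroups} to each exceptional type to enumerate the maximal-rank subsystem subgroups, and delete those which are centralizers of involutions --- equivalently, those obtained by removing a node of mark $2$ --- since these are already covered by Theorem~\ref{th:brundan vdk Donkin pairs}; what remains is the tabulated list. Throughout we may take $G$ simply connected, as in the bullets of the statement.

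Now fix such a pair $(G,H)$ with $G \neq G_2$. By Theorem~\ref{th:exceptional group Donkin pairs} --- whose prime hypotheses ($p \ge 5$ in types $F_4, E_6$; $p \notin \{2,5,7\}$ in type $E_7$; $p \ge 7$ in type $E_8$) are among the conditions recorded in the present statement --- it suffices to show that $\res^G_H \ho{G}{\varpi_i}$ has a good filtration as an $H$-module for $i = 4$ (type $F_4$), for $i \in \{1,6\}$ (type $E_6$), for $i \in \{1,7\}$ (type $E_7$), and for $i \in \{1,8\}$ (type $E_8$). Up to a central isogeny --- which changes neither the relevant module category nor the good-filtration property, cf.\ \ref{reduce-donkin-pair-to-simply-connected-case} --- write $H = H_1 \times \cdots \times H_t$ with each $H_j$ simply connected quasisimple of type $A$, $D$, or (inside $E_8$ only) $E_6$; then every induced module for $H$ is an external tensor product $\ho{H_1}{\mu_1} \otimes \cdots \otimes \ho{H_t}{\mu_t}$ of induced modules of the factors. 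I would compute each restriction $\res^G_H \ho{G}{\varpi_i}$ with LiE. For the minuscule weights ($\varpi_1, \varpi_6$ in type $E_6$ and $\varpi_7$ in type $E_7$) one finds that the restriction is a \emph{direct sum} of such external products; its type-$A$ exterior-power summands are induced modules by \ref{SLV-description}, so in these cases $\res^G_H \ho{G}{\varpi_i}$ visibly has a good filtration for \emph{every} $p$, and the prime condition in the statement is imposed solely by Theorem~\ref{th:exceptional group Donkin pairs}. For the remaining fundamental weights ($\varpi_4$ in type $F_4$, and $\varpi_1, \varpi_8$ in type $E_8$, together with the adjoint module $\varpi_1$ in type $E_7$) one again computes the restriction; when it is a direct sum of external products of exterior-power modules and of Lie algebras $\lie{sl}_n$, $\lie{so}_{2n}$, $\lie{e}_6$ of the factors, the result is immediate from \ref{SLV-description}, \ref{multilinear} (whenever one must pass through an $\exterior^m$ or $\Sym^m$ with $m < p$), and the fact that $\lie{sl}_n \simeq \ho{\SL_n}{\varpi_1 + \varpi_{n-1}}$ is a good-filtration module exactly when $p \nmid n$; when the restriction fails to split, one builds a good filtration by applying \ref{vdk-lemma} inside $H$ (or inside a factor $H_j$), ordering the weights occurring, inserting the already-established members of $\goodwt$ below the top one, and eliminating the incomparable ones by the $W_p$-linkage test of Remark~\ref{rem:lie and Wp-orbits}. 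The conditions on $p$ beyond those from Theorem~\ref{th:exceptional group Donkin pairs} --- notably the jump to $p \ge 11$ for $A_1 \times A_7 \subset E_8$ --- come out of precisely these module-theoretic verifications.

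The case $G = G_2$, which Theorem~\ref{th:exceptional group Donkin pairs} does not address, is handled directly, with $H = A_2$ the long-root $\SL_3$-subgroup. Its two fundamental modules --- of dimension $7$ (using $p > 2$) and of dimension $14$ --- restrict to $\SL_3$ as $\ho{\SL_3}{0} \oplus \ho{\SL_3}{\varpi_1} \oplus \ho{\SL_3}{\varpi_2}$ and as $\lie{sl}_3 \oplus \ho{\SL_3}{\varpi_1} \oplus \ho{\SL_3}{\varpi_2}$ respectively; the first is a direct sum of induced $\SL_3$-modules for every $p > 2$, while $\ho{G_2}{\varpi_2}$ restricts with a good filtration if and only if $p \neq 3$, the obstruction at $p = 3$ being that the natural $\SL_3$-summand $\lie{sl}_3 \simeq \ho{\SL_3}{\varpi_1 + \varpi_2}$ of $\Lie(G_2)$ has no good filtration when $p \mid 3$. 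Since $\SL_3$ is simply connected, \ref{Donkin-pair-by-set-of-weights} (with $\lambda_i = \varpi_i$) then shows $(G_2, A_2)$ is a Donkin pair exactly for $p \neq 3$.

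The hard part is the core of the second paragraph: verifying that each restriction $\res^G_H \ho{G}{\varpi_i}$ genuinely carries a good filtration, rather than merely having its character in $\sum_\mu \Z_{\ge 0} \, \chi(\mu)$ --- which, by \ref{good-filtration-layers-via-character}, it always does. For minuscule weights and for restrictions that split as direct sums of induced modules this is painless; for the rest, everything hinges on the $p$-sensitive linkage bookkeeping through \ref{vdk-lemma} and Remark~\ref{rem:lie and Wp-orbits}, which must be run case by case on the computer. Organising this uniformly over the whole list and extracting the sharp prime for each pair --- in particular accounting for the bound $p \ge 11$ in the $A_1 \times A_7$ case and for the behaviour at the small, possibly bad, primes still permitted in types $E_7$ and $G_2$ --- is where the real work lies.
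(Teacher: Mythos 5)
Your overall skeleton agrees with the paper's: enumerate the maximal-rank subsystem subgroups via the extended Dynkin diagram, discard the involution centralizers covered by Theorem~\ref{th:brundan vdk Donkin pairs}, reduce each remaining pair to checking that the handful of fundamental modules named in Theorem~\ref{th:exceptional group Donkin pairs} restrict to $H$ with a good filtration, and do the branching with LiE. The divergence --- and the gap --- is in the decisive step of converting a LiE \emph{character} computation into an actual good filtration. LiE only tells you that $\chi(\res^G_H\ho{G}{\varpi_i})=\sum_\mu \chi_H(\mu)$; it cannot tell you that ``the restriction is a direct sum'' of the corresponding induced modules, which is what you assert for the minuscule cases and hope for elsewhere. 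The paper closes this gap uniformly: it finds the primes for which every $H$-highest weight $\mu$ occurring lies in the low alcove (or, refining by linkage as in the $D_5\times A_3\subset E_8$ example, is minimal in its $W_p\bullet$-linkage class), so that each $\ho{H}{\mu}$ is \emph{simple}; then the restriction, having the right character and only these simple induced modules as composition factors, is semisimple and visibly has a good filtration. The primes in the statement are exactly the output of that criterion. Your proposal supplies no substitute for this argument in the non-minuscule cases: \ref{vdk-lemma} is not the right tool, since it presupposes an ambient module $M$ whose restriction to $H$ is \emph{already known} to have a good filtration and only concludes that a weight lies in $\goodwt(G,H)$; it is not a device for showing that a given $H$-module with prescribed character has a good filtration, and ``applying \ref{vdk-lemma} inside $H$'' has no referent pair. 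For the base weights $\varpi_1,\varpi_8$ of $E_8$ (or $\varpi_4$ of $F_4$) there is no such ambient $M$ available, so this branch of your argument does not get off the ground.

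Two smaller points. Even in the minuscule cases you should say why the character identity splits the module: the constituents are simple (minuscule for the type-$A$ factors) and pairwise unlinked, so $\Ext^1_H$ between them vanishes --- the same semisimplicity argument the paper uses, just automatic there. On the positive side, your explicit treatment of $G_2\supset A_2$ (which Theorem~\ref{th:exceptional group Donkin pairs} does not cover, and which the paper's proof passes over in silence) is a genuine improvement in completeness, and your identification of $p\mid n$ as the obstruction for $\lie{sl}_n$ is correct and is exactly the mechanism behind the excluded primes for the adjoint-type constituents.
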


\begin{proof}
As with Theorem \ref{th:nilpotent orbit Donkin pairs}, this theorem is proved by a case-by-case computation using branching rules in LiE. For the given group $G$ of exceptional type and subsystem subgroup $H$ we first use LiE to compute the characters of the appropriate modules from Theorem \ref{th:exceptional group Donkin pairs} considered as $H$-modules. This character is written as a sum of characters of induced modules for $H$. We then find the minimal prime $p'$ such that the highest weights of these induced modules for $H$ lie in the low alcove. This implies that for $p \geq p'$ the $G$-modules from Theorem \ref{th:exceptional group Donkin pairs} afford simple induced modules for $H$ and thus are good filtration modules for $H$. Hence that theorem implies that $(G,H)$ is a Donkin pair for $p \geq p'$. In addition, we can sometimes extend our analysis using linkage, as indicated in the example below.

As in the proof of Theorem \ref{th:nilpotent orbit Donkin pairs}, we will not give the details of each computation; we will instead give one illustrative example.

Let $G$ be simply-connected of type $E_8$ and let $H \subset G$ be
the subsystem subgroup of type $D_5 \times A_3$. For a dominant weight $\mu$ of $H$ let $\chih \mu$ denote the character of the induced module $\ho H \mu$ with highest weight $\mu$. We write the weights of
$D_5 \times A_3$ as $(a, b, c, d, e, f, g, h)$, where $(a, b, c, d,
e)$ is considered as a weight of $D_5$ and $(f, g, h)$ is considered
as a weight of $A_3$. Then the character of $\ho G{\varpi_1} $as
an $H$-module is $\displaystyle \sum_{\mu \in C} \chih \mu$, where $C$ is the following set of dominant weights for $H$:
\begin{align*}
&\big\{ (0) , (0,0,0,0,0,0,2,0) ,
(0,0,0,0,0,1,0,1) , (0,0,0,0,1,0,1,1) , \\
&(0,0,0,0,1,1,0,0) , (0,0,0,1,0,0,0,1) ,
(0,0,0,1,0,1,1,0) , (0,0,0,1,1,0,0,0) , \\
&(0,0,1,0,0,0,1,0) , (0,1,0,0,0,1,0,1)
(1,0,0,0,0,0,0,2) , (1,0,0,0,0,0,1,0) , \\
&(1,0,0,0,0,2,0,0) , (1,0,0,0,1,0,0,1) ,
(1,0,0,1,0,1,0,0) , (2,0,0,0,0,0,0,0) \big\}.
\end{align*}
Also, the character
of $\ho G{\varpi_8} $ as an $H$-module is $\displaystyle \sum_{\mu \in C'} \chih \mu$, where $C'$ is the following set of dominant weights for $H$:
\begin{equation*}
\big\{ (0,0,0,0,0,1,0,1) ,
(0,0,0,0,1,1,0,0) , (0,0,0,1,0,0,0,1) ,
(0,1,0,0,0,0,0,0) , (1,0,0,0,0,0,1,0) \big\}. 
\end{equation*}

We now consider the weights in $C \cup C'$ to find primes for which
the associated induced modules for $H$ are all simple. For example, consider the
weight $( 0, 0, 0, 1, 0, 1, 1, 0 ) \in C$. The induced module with highest
weight $(0, 0, 0, 1, 0)$ for $D_5$ is in the low alcove for $D_5$
when $p \geq 11$ and the induced module with highest weight $(1, 1,
0)$ for $A_3$ is in the low alcove for $A_3$ when $p \geq
5$. Thus $\ho H { 0,0,0,1,0,1,1,0 }$ is simple when $p \geq 11$.

Checking all the above weights in this manner, we see that for all $\mu \in C \cup C'$, $\chih \mu$ is the character of a simple $H$-module
for $p \geq 11$.

To further extend the analysis we consider linkage. We check to see if we can find a prime $< 11$ so that $\mu$ is the minimal dominant weight in its linkage class for all $\mu \in C \cup C'$; if so, $\chih \mu$ will be the character of a simple $H$-module for that prime also. This linkage computation now shows that for $p=7$, $\chih \mu$ is the character of a simple induced module for all $\mu \in C \cup C'$. Thus $H$ is a good filtration subgroup of $G$ for $p \geq 7$.
\end{proof}

\begin{remark} In Theorem \ref{th:subsystem subgroups in the exceptional
   case}, the primes listed give sufficient but not, a priori,
 necessary conditions for the given subsystem subgroup to be a good
 filtration subgroup. One could perhaps extend the results in the
 theorem to more primes using a finer analysis of alcove
 considerations and linkage or by using
 \ref{Donkin-pair-by-set-of-weights} rather than Theorem
 \ref{th:exceptional group Donkin pairs}.
\end{remark}

\bibliographystyle{model1-num-names}
\bibliography{bibliography}

\providecommand{\bysame}{\leavevmode\hbox to3em{\hrulefill}\thinspace}
\providecommand{\MR}{\relax\ifhmode\unskip\space\fi MR }
\providecommand{\MRhref}[2]{%
  \href{http://www.ams.org/mathscinet-getitem?mr=#1}{#2}
}
\providecommand{\href}[2]{#2}
\begin{thebibliography}{10}

\bibitem{andersen-jantzen}
Henning~Haahr Andersen and Jens~Carsten Jantzen, \emph{Cohomology of induced
  representations for algebraic groups}, Math. Ann. \textbf{269}, no.~4,
  487--525.

\bibitem{bourbaki}
Nicolas Bourbaki, \emph{{L}ie groups and {L}ie algebras. {C}hapters 4--6},
  Elements of Mathematics (Berlin), Springer-Verlag, Translated from the 1968
  French original by Andrew Pressley.

\bibitem{BK}
Michel Brion and Shrawan Kumar, \emph{{F}robenius splitting methods in geometry
  and representation theory}, Progress in Mathematics, no. 231, Birkh\"auser
  Boston, 2005.

\bibitem{brundan}
Jonathan Brundan, \emph{Dense orbits and double cosets}, NATO Adv. Sci. Inst.
  Ser. C Math. Phys. Sci., vol. 517, pp.~259--274, Kluwer Acad. Publ.

\bibitem{Carter}
Roger~W. Carter, \emph{Finite groups of {L}ie type}, Wiley Classics Library,
  John Wiley \& Sons Ltd., Conjugacy classes and complex characters; Reprint of
  the 1985 original; A Wiley-Interscience Publication.

\bibitem{donkin-normality}
Stephen Donkin, \emph{The normality of closures of conjugacy classes of
  matrices}, Invent. Math. \textbf{101}, no.~3, 717--736.

\bibitem{donkin}
\bysame, \emph{Rational representations of algebraic groups}, Lecture Notes in
  Mathematics, vol. 1140, Springer-Verlag, Tensor products and filtration.

\bibitem{Do94}
\bysame, \emph{On tilting modules and invariants for algebraic groups},
  Finite-dimensional algebras and related topics ({O}ttawa, {ON}, 1992), NATO
  Adv. Sci. Inst. Ser. C Math. Phys. Sci., vol. 424, Kluwer Acad. Publ.,
  Dordrecht, 1994, pp.~59--77.

\bibitem{humphreys-LA}
James~E. Humphreys, \emph{Introduction to {L}ie algebras and representation
  theory}, Graduate Texts in Mathematics, vol.~9, Springer-Verlag, Second
  printing, revised.

\bibitem{jantzen-nil}
Jens~Carsten Jantzen, \emph{Nilpotent orbits in representation theory}, {L}ie
  Theory: {L}ie Algebras and Representations (J-P Anker, ed.), Progress in
  Mathematics, vol. 228, Birkh\"auser, pp.~1--211.

\bibitem{JRAG}
\bysame, \emph{Representations of algebraic groups}, 2 ed., Mathematical
  Surveys and Monographs, vol. 107, American Mathematical Society.

\bibitem{involutions}
Max-Albert Knus, Alexander Merkurjev, Markus Rost, and Jean-Pierre Tignol,
  \emph{The book of involutions}, American Mathematical Society Colloquium
  Publications, vol.~44, American Mathematical Society.

\bibitem{lieb}
Martin~W. Liebeck, \emph{Introduction to the subgroup structure of algebraic
  groups}, Representations of reductive groups, Publ. Newton Inst., Cambridge
  Univ. Press, Cambridge, 1998, pp.~129--149.

\bibitem{mathieu}
Olivier Mathieu, \emph{Filtrations of {$G$}-modules}, Ann. Sci. \'Ecole Norm.
  Sup. (4) \textbf{23}, no.~4, 625--644.

\bibitem{mcninch-optimal}
George~J. McNinch, \emph{Optimal $\operatorname{SL}(2)$-homomorphisms},
  Comment. Math. Helv. \textbf{80}, 391 -- 426.

\bibitem{mcninch-exterior}
\bysame, \emph{Semisimplicity of exterior powers of semisimple representations
  of groups}, J. Algebra \textbf{225}, no.~2, 646--666.

\bibitem{mcninch-rat}
\bysame, \emph{Nilpotent orbits over ground fields of good characteristic},
  Math. Annalen \textbf{329} (2004), 49 -- 85, arXiv:math.RT/0209151.

\bibitem{mcninch-testerman-cr}
George~J. McNinch and Donna~M. Testerman, \emph{Completely reducible
  $\operatorname{SL}(2)$-homomorphisms}, Transactions of the AMS \textbf{359},
  arXiv:math.RT/0510377.

\bibitem{premet}
Alexander Premet, \emph{Nilpotent orbits in good characteristic and the
  {K}empf-{R}ousseau theory}, J. Alg \textbf{260}, 338 -- 366.

\bibitem{seitz}
Gary~M. Seitz, \emph{Unipotent elements, tilting modules, and saturation},
  Invent. Math. \textbf{141}, no.~3, 467--502.

\bibitem{vdK}
Wilberd van~der Kallen, \emph{{S}teinberg modules and {D}onkin pairs},
  Transform. Groups \textbf{6}, no.~1, 87--98.

\bibitem{vdK2}
\bysame, \emph{Lectures on {F}robenius splittings and {$B$}-modules}, Published
  for the Tata Institute of Fundamental Research, Bombay, 1993.

\bibitem{lie}
Marc van Leeuwen, Arjeh Cohen, and Bert Lisser, \emph{Lie : A package for {L}ie
  group computations}.

\end{thebibliography}


\end{document}